\documentclass[a4paper,12pt]{article}
\usepackage[top=2cm, bottom=2cm, outer=2cm, inner=2cm, headsep=14pt]{geometry}
\usepackage{amsmath,amsfonts}
\usepackage{enumitem}
\usepackage{adjustbox}
\usepackage{blkarray}

\usepackage{tikz}
\usepackage{tkz-graph}
\usepackage{tkz-berge} 
\usetikzlibrary{positioning}

\usepackage{multicol,color} 

\usepackage{tikz} 
\usetikzlibrary{decorations.pathreplacing,decorations.markings} 
\usepackage{url} 
\usepackage{hyperref}

\usepackage{pgfplots}
\usepackage{mathrsfs}
\usetikzlibrary{arrows}

\usepackage{soul,xcolor} 

\tikzset{
  on each segment/.style={
    decorate,
    decoration={
      show path construction,
      moveto code={},
      lineto code={
        \path [#1]
        (\tikzinputsegmentfirst) -- (\tikzinputsegmentlast);
      },
      curveto code={
        \path [#1] (\tikzinputsegmentfirst)
        .. controls
        (\tikzinputsegmentsupporta) and (\tikzinputsegmentsupportb)
        ..
        (\tikzinputsegmentlast);
      },
      closepath code={
        \path [#1]
        (\tikzinputsegmentfirst) -- (\tikzinputsegmentlast);
      },
    },
  },
  mid arrow/.style={postaction={decorate,decoration={
        markings,
        mark=at position .5 with {\arrow[#1]{stealth}}
      }}},
}

\usepackage{avant}

\allowdisplaybreaks

\newcommand{\G}{\Gamma}

\newcommand{\Mat}{\mbox{\rm Mat}}

\def\0{{\boldsymbol 0}}

\def\E{{\cal E}}

\def\J{{\cal J}}

\def\RR{{\mathbb R}}

\def\A{{\mathcal A}}

\def\R{{\mathcal R}}
\def\P{{\mathcal P}}

\def\I{{\mathcal I}}

\DeclareMathOperator{\Span}{span}

\DeclareMathOperator{\rank}{rank}

\DeclareMathOperator{\trace}{trace}

\DeclareMathOperator{\im}{col}

\DeclareMathOperator{\spec}{spec}

\DeclareMathOperator{\OO}{\boldsymbol{O}}

\DeclareMathOperator{\col}{col}

\newtheorem{theorem}{Theorem}[section]
\newtheorem{lemma}[theorem]{Lemma}
\newtheorem{corollary}[theorem]{Corollary}
\newtheorem{proposition}[theorem]{Proposition}
\newtheorem{definition}[theorem]{Definition}

\definecolor{ForestGreen}{RGB}{12, 110, 46}
\definecolor{ForestGreenTwo}{RGB}{120, 110, 86}

\newenvironment{proof}{{\noindent\it Proof. }}{\nopagebreak\hspace*{0.5cm}\hfill$\hbox{\rule{3pt}{6pt}}$\medskip}

\definecolor{ForestGreen}{RGB}{12, 110, 46}
\definecolor{ForestGreenTwo}{RGB}{120, 110, 86}

\newfont\fiverm{cmr5}
\def\eeq{\end{equation}}
\def\lbeq#1{\begin{equation} \label{#1}}

\title{
On the combinatorial structure of graphs with a spectral idempotent of small dual diameter
}

\author{
{Edwin R. van Dam}\\
{\small Department of Econometrics and Operations Research}\\
{\small Tilburg University, the Netherlands}\\
{\small Edwin.vanDam@tilburguniversity.edu} \and
{Giusy Monzillo}\\
{\small Faculty of Mathematics, Natural Sciences}\\
{\small and Information Technologies}\\
{\small University of Primorska}\\
{\small Muzejski trg 2, 6000 Koper, Slovenia }\\
{\small Giusy.Monzillo@famnit.upr.si} \and
{Safet Penji{\'c}}\\
{\small Faculty of Mathematics, Natural Sciences}\\
{\small and Information Technologies; and}\\
{\small Andrej Maru\v{s}i\v{c} Institute}\\
{\small University of Primorska}\\
{\small Muzejski trg 2, 6000 Koper, Slovenia }\\
{\small Safet.Penjic@iam.upr.si}
}

\begin{document}
\maketitle

\begin{abstract}
Let $\G$ be a connected regular graph with an eigenvalue $\lambda$ and corresponding idempotent $E_{\lambda}$. Let $\E_{\lambda}=\langle J,E_{\lambda}\rangle^\circ$ be the algebra generated by $J$ and $E_\lambda$ with respect to the entrywise-Hadamard product, where $J$ is the all-$1$ matrix. We study the combinatorial structure of a graph $\G$ for which $\E_{\lambda}$ has dimension $2$, giving a combinatorial characterization of such graphs in terms of equitable partitions. We present many examples and classify the distance-regular graphs with this property, as well as graphs that generate a $3$-class association scheme. We also study the graphs that have two eigenvalues $\lambda$ for which ${\rm dim}(\E_{\lambda})=2$ and determine all such graphs with four distinct eigenvalues.
\end{abstract}

\begin{center}
{\bf Celebrating the 70th birthday of Paul Terwilliger}
\end{center}

\smallskip
{\small
\noindent
{\it{MSC:}} 05E30, 05C50.

\smallskip
\noindent
{\it{Keywords:}} Symmetric association schemes, equitable partitions, eigenvalues, idempotents.
}


\section{Introduction}

A vector space of symmetric matrices that is closed under both ordinary and entrywise multiplications, containing the identity matrices for both multiplications, i.e., $I$ and $J$, is known to be the Bose-Mesner algebra of an association scheme.

If $\G$ is a relation in a $d$-class association scheme with adjacency algebra $\A=\langle I,A\rangle^\cdot =\langle I,A,A^2,\ldots\rangle$, then $\G$ generates the entire scheme if $\dim \A=d+1$. On the other end, if $\G$ is a regular graph with $\dim \A=2$, then $\G$ must be a disjoint union of cliques. In addition, if $\G$ is a relation in an association scheme, this gives us relatively little information, as we can only conclude that the scheme is imprimitive.

Here we study a dual version of the above. We consider a regular graph $\G$ with an idempotent $E$ (for the ordinary product) in the adjacency algebra $\A$, and the ``idempotent algebra'' $\E = \langle J, E \rangle^\circ=\langle J,E,E\circ E,E^{\circ3},\ldots\rangle$, where $\circ$ denotes entrywise (Hadamard) multiplication. In particular, we consider the case $\dim \E=2$ and study the combinatorial structure that can be obtained from this (note that this is again the other end of dimension $d$, which in the case of a $d$-class scheme gives all combinatorial information).

Before we present our main results, we define what an indubitable partition is.

\begin{definition}
\label{xA}{\rm
A partition $\Pi$ of the vertex set $X$ of a connected graph $\G$ is called {\em indubitable} with parameters $(a,b)$ whether for each cell $\P\in\Pi$ of the partition and each vertex $x$ of $\G$ the number of neighbors of $x$ in $\P$ equals $a$ if $x \in \P$, and $b$ otherwise.}
\end{definition}
If $\Pi$ is an indubitable partition with parameters $(a,b)$ and $r+1$ cells of (the vertex set of) a connected $k$-regular graph $\G$, then its quotient matrix equals $aI+b(J-I)$, which has eigenvalues $k=a+rb$ (with multiplicity $1$) and $\lambda=a-b$ (with multiplicity $r$), which yields that $a=\lambda+\frac{k-\lambda}{r+1}$ and $b=\frac{k-\lambda}{r+1}$. It is well known (see Section~\ref{oA}) that these eigenvalues form a sub(multi)set of the spectrum of $\G$. Of particular interest (and the topic of research of this paper) is the case when the number of cells of the indubitable partition is equal to $m+1$, where $m$ is the multiplicity of the eigenvalue $\lambda$;
we then say that the indubitable partition is {\em full}. In this case, all eigenvectors for $\lambda$ can be obtained from the quotient.

Our main characterization of full indubitable partitions is precisely in terms of the idempotent algebra of the spectral idempotent $E_{\lambda}$ (the projection matrix of the related eigenspace) having dimension $2$.

\begin{theorem}
\label{kD}
Let $\G$ be a connected regular graph on $v$ vertices that has an eigenvalue $\lambda$ with multiplicity $m$ and corresponding spectral idempotent $E_{\lambda}$. Let
$\E=\E_{\lambda}=\langle J,E_{\lambda}\rangle^\circ$. Then $\E$ has dimension $2$ if and only if $\G$ has a full indubitable partition corresponding to $\lambda$.
Moreover, if $\dim \E=2$, then $E_{\lambda}
= \frac{m+1}{v} K - \frac{1}{v} J$,
where
$K$ is permutation-similar to $I_{m+1}\otimes J_{v/(m+1)}$.
\end{theorem}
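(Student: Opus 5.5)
The plan is to prove both directions directly, using that $E_\lambda$ is the orthogonal projection onto the $\lambda$-eigenspace. Since $\G$ is connected and regular, $E_\lambda J = 0$ when $\lambda\neq k$. The case $\lambda=k$ is trivial: then $m=1$, $E_k=\frac1v J$, $\E$ has dimension $1$ (not $2$), and a full partition would have two cells but eigenvalue $k$ only once. So I treat $\lambda\ne k$ throughout. The dimension of $\E$ equals the number of distinct entries of $E_\lambda$, counting the value $0$ only if it occurs; this holds because the Hadamard algebra generated by a matrix and $J$ is spanned by the indicator matrices of its level sets.

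For $(\Leftarrow)$, let $\Pi$ be a full indubitable partition with cells $\P_0,\dots,\P_m$, parameters $(a,b)$, and characteristic matrix $S$ (of size $v\times(m+1)$). The quotient is $Q=aI+b(J-I)$, so $AS=SQ$. For any $y\in\RR^{m+1}$ with $y\perp \mathbf 1$ we get $Qy=(a-b)y=\lambda y$, hence $A(Sy)=\lambda Sy$. This gives an $m$-dimensional space $\{Sy: y\perp\mathbf 1\}$ inside the $\lambda$-eigenspace. Since the multiplicity is $m$, this space is the whole eigenspace.

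A short computation shows all cells have equal size. Counting edges between $\P_i$ and $\P_j$ gives $b|\P_i|=b|\P_j|$, and $b=\frac{k-\lambda}{m+1}>0$. Write $K=SS^T$, which is permutation-similar to $I_{m+1}\otimes J_{v/(m+1)}$. The projection onto $\col(S)$ is $\frac{m+1}{v}K$, and the projection onto $\mathbf 1$ is $\frac1vJ$. The $\lambda$-eigenspace is the orthogonal complement of $\mathbf 1$ inside $\col(S)$, so $E_\lambda=\frac{m+1}{v}K-\frac1vJ$. This matrix has exactly two distinct entries, $\frac mv$ and $-\frac1v$, so $\dim\E=2$.

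For $(\Rightarrow)$, suppose $E_\lambda$ has exactly two distinct entries $\alpha>\beta$. Write $E_\lambda=\beta J+(\alpha-\beta)M$, where $M$ is a symmetric $0/1$ matrix. The diagonal is constant, equal to $m/v>0$. Because $\operatorname{trace}E_\lambda=m$ and $E_\lambda$ is positive semidefinite with nonzero off-diagonal contributions, the diagonal value must be the larger entry $\alpha$, so $M$ has ones on its diagonal. From $E_\lambda J=0$ we get $v\beta+(\alpha-\beta)M\mathbf 1=0$, so $M$ has constant row sums $s=-v\beta/(\alpha-\beta)$.

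The main obstacle is showing that $M$ is the adjacency relation of an equivalence relation, i.e.\ that $M+$ (nothing) is block diagonal with all-ones blocks. I would argue through Gram vectors. With $u_x=E_\lambda e_x$, we have $\langle u_x,u_y\rangle=(E_\lambda)_{xy}$ and $\|u_x\|^2=\alpha$. If $M_{xy}=1$ then $\langle u_x,u_y\rangle=\alpha=\|u_x\|\|u_y\|$, so $u_x=u_y$ by Cauchy–Schwarz. Hence $M_{xy}=1$ holds exactly when $u_x=u_y$, which is an equivalence relation. Its classes have equal size $s$ by the row sums, so $M=K$ is permutation-similar to $I_{t}\otimes J_{s}$ with $t=v/s$.

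Next I determine $t$. The matrix $E_\lambda=\beta J+(\alpha-\beta)K$ has eigenvalues $(\alpha-\beta)s$ with multiplicity $t-1$, $0$ on $\mathbf 1$, and $0$ elsewhere. Idempotency forces $(\alpha-\beta)s=1$, and the rank gives $t-1=m$. Solving these yields $E_\lambda=\frac{m+1}vK-\frac1vJ$.

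Finally, let $\Pi$ be the partition into the $m+1$ classes, with characteristic matrix $S$. The eigenspace equals $\col(E_\lambda)$, which is $\{Sy:y\perp\mathbf 1\}$, and $\col(S)=\col(E_\lambda)+\Span\{\mathbf 1\}$ is $A$-invariant. Therefore $AS=SQ$ for some quotient $Q$, and $\Pi$ is equitable. The matrix $Q$ acts as $\lambda$ on $\mathbf 1^\perp$ and as $k$ on $\mathbf 1$, so $Q=\lambda I+\frac{k-\lambda}{m+1}J$. This is of the form $aI+b(J-I)$, so $\Pi$ is a full indubitable partition.
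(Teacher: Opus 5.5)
Your proof is correct and follows essentially the paper's route: the Gram-vector/Cauchy--Schwarz argument that turns a two-valued idempotent into $\frac{m+1}{v}K-\frac1vJ$ (Lemma~\ref{1D}), $A$-invariance of $\col(S)=\col(E_\lambda)+\Span\{\mathbf 1\}$ to get equitability and the quotient (Propositions~\ref{bC} and~\ref{gDD}), and the converse via $K=SS^\top$ (Proposition~\ref{pe}); your explicit handling of $\lambda=k$ and of equal cell sizes fills small points the paper leaves implicit. The one loosely worded step is the diagonal: do not assert up front that it is constant, since spectral idempotents of regular graphs need not have constant diagonal; instead derive it from $E_\lambda J=0$, which forces $\beta<0$, together with $E_{xx}\ge 0$ for a positive semidefinite matrix, so every diagonal entry equals $\alpha$, and then the trace gives $\alpha=m/v$.
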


This paper is further organized as follows. In Section~\ref{2B}, we introduce the notation and further background.
In Section~\ref{fK}, we prove Theorem~\ref{kD} in a series of steps. We also show, in Proposition~\ref{bC}, that if there is an idempotent $E \in \A$ of rank $m$ such that $EJ=0$ and $\dim \langle J,E\rangle^\circ=2$, then $\G$ has an equitable partition into $m+1$ parts.
In Section~\ref{eF}, we present examples of graphs with full indubitable partitions. These include bipartite graphs (Section~\ref{eC}) and some other graphs with a simple eigenvalue as characterized in Proposition~\ref{eL}. We also determine all distance-regular graphs with a full indubitable partition in Proposition~\ref{p_drg} and characterize the examples in a $3$-class association scheme in Proposition~\ref{prop:4evas}. We finish this section by discussing the so-called uniform schemes in Section~\ref{sec:uniform} and a general product construction in Section~\ref{7A}.
In the final Section~\ref{xN}, we consider graphs with two full indubitable partitions. In Theorem~\ref{xQ}, we obtain that the only such graphs with four distinct eigenvalues are the Cartesian products of two complete graphs (that is, grid graphs), which we also characterize combinatorially as certain co-edge-regular graphs. We conclude by determining the bipartite graphs with five distinct eigenvalues and two full indubitable partitions in Theorem~\ref{rC}.

\newpage
\section{Preliminaries}
\label{2B}

\subsection{Graphs and matrices}\label{sec:gm}

In this paper, we consider simple undirected graphs $\G$
with the standard $01$-adjacency matrix $A$.
By the eigenvalues of $\G$ we mean the eigenvalues of $A$. For an eigenvalue $\lambda$, we let $E_{\lambda}$ be the corresponding {\em spectral idempotent}, that is, the matrix representing the projection onto the eigenspace $\ker (\lambda I-A)$.
The {\em adjacency algebra} $\A$ of $\G$ is the vector space $\{f(A) \mid f \mbox{ is a polynomial} \}$ over $\RR$ spanned by the powers of $A$. We note that $\A=\Span\{E_\lambda\mid \lambda \mbox{ is an eigenvalue of }\G\}$.
For more general background and notation on spectra of graphs, we refer to \cite{BH}.

Dual to the adjacency algebra, for an idempotent $E$, we let $\E$ be the {\em idempotent $\circ$-algebra} $\{f(E) \mid f \mbox{ is a polynomial} \}$, where the multiplication $\circ$ is entrywise, in other terms,
$\E = \langle J,E,E\circ E, E\circ E \circ E,\dots \rangle$.

Matrices $A$ and $B$ are called {\em permutation-similar} (denoted by $A \sim B$) if there is a permutation matrix $P$ such that $B=P^{\top}AP$. In other words, $B$ can be obtained from $A$ by simultaneously permuting rows and columns. Two graphs are {\em isomorphic} if their adjacency matrices are permutation-similar. Matrices $A_1$ and $A_2$ are simultaneously permutation-similar to $B_1$ and $B_2$ if there is a permutation matrix $P$ such that $B_1=P^{\top}A_1P$ and $B_2=P^{\top}A_2P$.

For matrices $C = (c_{ij})\in\Mat_{m\times n}(\RR)$ and $B\in\Mat_{p\times q}(\RR)$, the {\em Kronecker product} $C \otimes B\in\Mat_{mp\times nq}(\RR)$ is the (block) matrix obtained by replacing each entry $c_{ij}$ of $C$ with the block $c_{ij} B$.

The {\em Cartesian product} of two graphs with adjacency matrices $A$ and $B$ (on $v$ and $v'$ vertices, respectively) is the graph with adjacency matrix $A \otimes I_{v'} + I_v \otimes B$.
Its eigenvalues are all possible sums of eigenvalues of $A$ and $B$ \cite[\S1.4.6]{BH}.

Let $\G$ be a graph with adjacency matrix $A$. Then the {\em bipartite double} of $\G$ is the graph with adjacency matrix $(J_2-I_2)\otimes A$.

\subsection{Some properties of the quotient matrix of an equitable partition}
\label{oA}

An {\it equitable partition} of a graph $\G$ is a partition $\Pi=\{\P_0, \P_1, \dots, \P_s\}$ of its vertex set, such that for all integers $i,j$ $(0 \le i,j \le s)$ the number of neighbors in $\P_j$ of a vertex $x\in \P_i$ depends only on $i$ and $j$ (and not on $x$). We denote the corresponding parameter by $q_{ij}$, i.e., for every $x\in\P_i$ we have $|\G_1(x)\cap\P_j|=q_{ij}$. The matrix $Q= (q_{ij})_{(s+1)\times(s+1)}$ is called the {\em quotient matrix} of the equitable partition $\Pi$ of $\G$. If $P$ is a matrix having the characteristic vectors of the cells of the partition as columns, then we have that $AP=PQ$. In fact, we have the following characterization.

\begin{lemma}[{\cite[\S9.3]{GR}}]
\label{kB}
Let $P$ be a matrix whose columns are the characteristic vectors of the cells of a partition $\Pi$ of a graph $\Gamma$ with adjacency matrix $A$. Then the following are equivalent.
\begin{enumerate}[label=\rm(\roman*)]
\item $\Pi$ is an equitable partition of $\G$.
\item There is a matrix $Q$ such that $AP=PQ$.
\item $\col P$ is invariant under $A$.
\end{enumerate}
\end{lemma}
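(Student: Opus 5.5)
The plan is to prove the cycle of implications (i)$\Rightarrow$(ii)$\Rightarrow$(iii)$\Rightarrow$(i). Everything follows from one observation: for a vertex $x$ and a cell $\P_j$ with characteristic vector $p_j$ (the $j$-th column of $P$), the entry $(AP)_{x,j}=(Ap_j)_x=\sum_{y\in\P_j}A_{xy}$ equals $|\G_1(x)\cap\P_j|$, the number of neighbors of $x$ in $\P_j$.

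For (i)$\Rightarrow$(ii), I take $Q=(q_{ij})$ to be the quotient matrix of the equitable partition. For $x\in\P_i$, the row of $P$ indexed by $x$ is the $i$-th standard basis row vector. Hence $(PQ)_{x,j}=q_{ij}$. By the observation above, $(AP)_{x,j}=|\G_1(x)\cap\P_j|$, and this equals $q_{ij}$ by equitability. So $AP=PQ$.

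For (ii)$\Rightarrow$(iii), it suffices to note that $A\,\col P=\col(AP)=\col(PQ)\subseteq\col P$.

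For (iii)$\Rightarrow$(i), invariance gives $Ap_j\in\col P$ for each $j$, say $Ap_j=\sum_i c_{ij}p_i$. The cells are nonempty and pairwise disjoint, so the columns $p_i$ have disjoint supports covering the vertex set. Therefore a vector lies in $\col P$ exactly when it is constant on each cell, and the coefficients $c_{ij}$ are uniquely determined. Evaluating at $x\in\P_i$ gives $|\G_1(x)\cap\P_j|=(Ap_j)_x=c_{ij}$, which depends only on $i$ and $j$. Thus $\Pi$ is equitable, with $Q=(c_{ij})$.

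There is no genuine obstacle here. The only point needing care is the last step: one must describe $\col P$ as the space of vectors constant on the cells, which uses that the cells partition the vertex set.
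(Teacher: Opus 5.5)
Your proof is correct. The cycle (i)$\Rightarrow$(ii)$\Rightarrow$(iii)$\Rightarrow$(i) rests on the identity $(AP)_{x,j}=|\G_1(x)\cap\P_j|$ and on describing $\col P$ as the vectors constant on the cells, which is the standard argument. The paper gives no proof of its own and simply cites Godsil--Royle \S9.3, where the lemma is proved in essentially this way.
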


It is well known and easy to see that if $x$ is an eigenvector for $Q$ with eigenvalue $\lambda$, then $Px$ is an eigenvector for $A$. This gives rise to eigenvectors that are constant on the cells of the equitable partition. In fact, the other eigenvectors are orthogonal to $P$ (see \cite[\S9.3]{GR}). As mentioned in the introduction, in case the indubitable partition is full corresponding to an eigenvalue $\lambda$, then all eigenvectors of $A$ for $\lambda$ are constant on the cells of the partition.

We note that an indubitable partition as defined in the introduction is equitable, in fact, it is uniformly equitable, i.e., there is a $01$-matrix $\widetilde{A}$ with zero diagonal such that $Q=aI+b\widetilde{A}$  (as defined in {\cite[\S4]{GM} and \cite[\S11.1]{BCN})}).

Finally, we observe that from $AP=PQ$, it follows that $f(A)P=Pf(Q)$ for any polynomial $f$, and hence it follows that an equitable partition for $\G$ is equitable for any matrix in the adjacency algebra $\A$ of $\G$.
(Of particular interest are $01$-matrices within the adjacency algebra.)
Moreover, if $\G$ has an indubitable partition with parameters $(a,b)$, then $Q=aI+b(J-I)$, and it follows that the partition is indubitable for any matrix in $\A$ as well.

\subsection{Association schemes and distance-regular graphs}

A (symmetric) $d$-class {\em association scheme} is a partition of the edge set of the complete graph on $v$ vertices into spanning subgraphs $\G_1,\G_2,\dots, \G_d$ (that are called the non-trivial relations of the scheme) with adjacency matrices $B_1,B_2,\dots, B_d$ such that the vector space $\langle B_0,B_1,B_2,\dots, B_d \rangle$ (with $B_0=I$) is an algebra, that is, it is closed under matrix multiplication.  This algebra is called the {\em Bose-Mesner algebra} of the association scheme, and it is important to note that it is also closed under entrywise multiplication $\circ$ (by construction). It also has a dual basis $\{E_0=\frac1v J,E_1,\dots,E_d\}$ of primitive idempotents. For background and notation on association schemes, we refer to \cite{BIT,BCN}.

\subsubsection{Imprimitivity}
\label{iMp}

An assocation scheme is called imprimitive if at least one of its relations is disconnected. For us, it is relevant (cf. Theorem~\ref{kD}) that an association scheme is imprimitive if and only if there are sets of indices $\I$ and $\J$ (both containing $0$) such that
$$
\sum_{i \in \I} B_i = \tfrac v{m+1} \sum_{j \in \J} E_j \sim I_{m+1}\otimes J_{v/(m+1)}
$$
for some $m$ with $0<m<v-1$ (see, for example, \cite[{\S2}]{DMM} or \cite[Theorem~2.1]{MMW}).
In this case, let $K=\sum_{i \in \I} B_i\sim I_{m+1}\otimes J_{v/(m+1)}$ and let $\Pi$ be the partition of the vertex set whose characteristic vectors are the distinct columns of $K$.
It then follows from Lemma~\ref{kB}(iii) (cf. the proof of Proposition~\ref{bC}) that $\Pi$ is an equitable partition for all relations in the association scheme (since $\im K=\im\sum_{j \in \J} E_j$). In fact, if there is an edge in $\G_h$ between two fixed cells of $\Pi$, then $h \notin \I$ and it is not hard to see that every vertex in one of the two cells has $b_h:=\sum_{i \in \I}p^h_{hi}$ $\G_h$-neighbors in the other cell. This yields that the partition $\Pi$ is uniformly equitable, i.e., its quotient matrix $Q_h$ equals $b_h\widetilde{B_h}$ for some symmetric $01$-matrix $\widetilde{B_h}$ with zero diagonal. The set $\{\widetilde{B_h} \mid h \notin \I \}$ is the set of adjacency matrices of the non-trivial relations of the so-called quotient scheme \cite[\S2.4]{BCN}). Note that some of the $\widetilde{B_h}$ are typically identical and we should therefore not interpret the above set of adjacency matrices as a multiset. Moreover, it follows that for $j \in \J$, the primitive idempotents $E_j$ are constant on the cells of the partition (since each eigenvector for $\lambda_j$ is constant on the cells of the partition, cf. Section~\ref{oA}), and their quotient matrices (appropriately scaled so that they are idempotent) form the primitive idempotents of the quotient scheme (and hence $|\J|-1$ is its number of classes). Note also that the ranks of the primitive idempotents in the quotient scheme are the same as the corresponding ones in the original association scheme.

Further, the above uniformly equitable partition for $\G_h$ is indubitable when $\widetilde{B_h}=J-I$, which is the case precisely if the quotient scheme is trivial (i.e., there is just one relation, the complete graph). In this case it follows that the partition is indubitable for all $h$. (Note that if $h \notin \I$, the parameter $b$ of the indubitable partition is nonzero, while for $h \in \I$, $b = 0$).
Moreover, then $\J$ has size $2$, so there is a primitive idempotent $E_j$ such that $\tfrac v{m+1}(E_0+E_j)$ is permutation-similar to $I_{m+1}\otimes J_{v/(m+1)}$ (cf. Theorem~\ref{kD}.)

For completeness (but quite irrelevant for us), we mention that when we restrict the matrices in $\{B_i \mid i \in \I\}$ to a fixed cell of the partition, then this induces a so-called subscheme on that cell  \cite[{\S2.7.4}]{BIT}).

\subsubsection{Distance-regular graphs}
\label{sec:drgpre}

Assume that $\G$ is a connected graph of diameter $d$.
If $\G_i$ denotes the distance-$i$ graph of $\G$ (that is, vertices are adjacent in $\G_i$ if their distance in $\G$ is $i$), then $\G$ is called {\em distance-regular} if
$\G_1,\G_2,\dots, \G_d$ form an association scheme. In this case, the primitive idempotents of the scheme are the same as the spectral idempotents of $\G$.

A distance-regular graph is called {\em imprimitive} if the corresponding association scheme is imprimitive. A distance-regular graph $\G$ of diameter $d$ is called {\em antipodal} if its distance-$d$ graph $\G_d$ is a disjoint union of cliques.
An imprimitive distance-regular graph with valency at least $3$ is bipartite or antipodal (or both) \cite[Theorem~4.2.1]{BCN}.

Moreover, if $\G$ is bipartite, then the corresponding ``imprimitivity system'' is given by $\I=\{0,2,4,\dots, 2\lfloor \frac d2 \rfloor\}$ and $\J=\{0,d\}$ (with the standard ordering of primitive idempotents). On the other hand, if $\G$ is antipodal, then $\I=\{0,d \}$ and $\J=\{0,2,4,\dots, 2\lfloor \frac d2 \rfloor \}$.
We will use this in Section~\ref{sec:as}.

\section{The indubitable partition}
\label{fK}

In this section, we will prove Theorem~\ref{kD}.
Let $\E=\langle J,E\rangle^\circ$ for some idempotent matrix $E$ and assume that $\dim \E =2$. Since this means that $E$ has just two distinct entries, $\theta_0$ and $\theta_1$ say, we can write $E=\theta_0 K +\theta_1 (J-K)$ for some $01$-matrix $K$. We will now determine the structure of this matrix $K$, which in the end produces the required indubitable partition. We first focus on  idempotent matrices in the adjacency algebra of a regular graph.

\subsection{Idempotents in the adjacency algebra}

\begin{lemma}
\label{1D}
Let $E$ be a symmetric $v \times v$ idempotent matrix of rank $m$ such that $EJ=\OO$. Let $\E=\langle J,E\rangle^\circ$. If $\dim \E=2$, then $E= \frac{m+1}{v} K - \frac{1}{v} J$,
where
$K$ is permutation-similar to $I_{m+1}\otimes J_{v/(m+1)}$.
\end{lemma}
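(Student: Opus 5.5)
Since $\dim \E = 2$, the matrix $E$ has exactly two distinct entries $\theta_0,\theta_1$, so $E=\theta_0 K+\theta_1(J-K)$ with $K$ a symmetric $01$-matrix. (Note $E\neq \OO$ and $E$ is not a multiple of $J$, because $EJ=\OO$ and $m\ge 1$.) The plan is to choose the labelling so that the diagonal entries of $E$ equal $\theta_0$. This works because $E_{xx}=\|Ee_x\|^2$, and a vanishing diagonal entry would force $E=\OO$. Hence $K$ has all-ones diagonal. The aim is then to show that $K$ is the matrix of an equivalence relation with equal class sizes, and to pin down $\theta_0$ and $\theta_1$.

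First I show that $K$ is an equivalence relation. For this, consider the columns $u_x=Ee_x$, which all have the same norm $\sqrt{\theta_0}$, while $\langle u_x,u_y\rangle = E_{xy}\in\{\theta_0,\theta_1\}$. If $E_{xy}=\theta_0$, then Cauchy--Schwarz equality gives $u_x=u_y$, i.e.\ columns $x$ and $y$ of $E$ coincide. So $K_{xy}=1$ holds exactly when the columns of $E$ at $x$ and $y$ are equal. Equality of columns is an equivalence relation, so $K$ is permutation-similar to $\bigoplus_i J_{n_i}$ for some class sizes $n_0,\dots,n_s$. (The case $\theta_1=\theta_0$ is excluded since then $E=\theta_0 J$.)

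Next I count classes and sizes. Write $E=(\theta_0-\theta_1)K+\theta_1 J$. The condition $EJ=\OO$ gives $(\theta_0-\theta_1)n_i+\theta_1 v=0$ for every $i$, so all $n_i$ are equal, say $n=v/(s+1)$, and $\theta_1=-(\theta_0-\theta_1)n/v$. Then $\rank E$ is computed on $\col K$ orthogonally projected away from the all-ones vector, giving $\rank E=(s+1)-1=s$. Hence $s=m$ and $n=v/(m+1)$.

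Finally I fix the scalar. Idempotency together with $EJ=\OO$, after using $K^2=nK$ and $KJ=nJ$, gives $E^2=(\theta_0-\theta_1)^2 n K+c J$ for some constant $c$; comparing with $E$ yields $(\theta_0-\theta_1)n=1$. Therefore $\theta_0-\theta_1=\frac{m+1}{v}$ and $\theta_1=-\frac1v$, which is exactly the claimed form $E=\frac{m+1}{v}K-\frac1vJ$. (Alternatively, use $\trace E=m$.)

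The main obstacle is the equivalence-relation step. The Gram/Cauchy--Schwarz argument handles it, provided $\theta_0$ is taken to be the diagonal value and one checks that $\theta_0>\theta_1$, so that the two labels cannot be swapped.
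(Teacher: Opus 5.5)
Your proof is correct and follows essentially the paper's route: the Gram-matrix/Cauchy--Schwarz argument shows that "entry equals the diagonal value" is an equivalence relation, after which $EJ=\OO$ gives equal class sizes, the rank gives $m+1$ classes, and the trace (or, in your version, idempotency) fixes the scaling. The one step to tighten is the constant diagonal. Positivity of each $E_{xx}$ alone does not give it; you also need, as the paper notes, that zero row sums together with the absence of zero entries force $\theta_0>0>\theta_1$, so every diagonal entry, being nonnegative, equals $\theta_0$.
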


\begin{proof}Let
\begin{equation}
\label{aD}
E=\theta_0 K+\theta_1 (J-K),
\end{equation}
where $K$ is a $01$-matrix and $\theta_0>\theta_1$.

 From $EJ=\OO$ (i.e., $E$ has row sums $0$), it follows that $\theta_0>0>\theta_1$ and that $K$ has constant row sums $-v\theta_1/(\theta_0-\theta_1)$. Furthermore, $E\circ I=\theta_0I$ as $E$ is positive semidefinite, and hence $\theta_0=m/v$ because $\trace E =m$.

It now follows that $E$ is the Gram matrix of $v$ vectors of length $\sqrt{m/v}$ in $\mathbb{R}^m$ with inner products $m/v$ and $\theta_1$. But such vectors have inner product $m/v$ only if they are equal, hence inner product being $m/v$ is an equivalence relation.

Because $K$ has constant row sums and rank $m+1$, it now follows that $K$ is permutation-similar to $I_{m+1}\otimes J_{v/(m+1)}$. Moreover, the row sums imply that
$-v\theta_1/(\theta_0-\theta_1)=v/(m+1)$, which together with $\theta_0=m/v$ yields that $\theta_1=-1/v$.
\end{proof}

Observe that from the above representation of $E$ as a Gram matrix we have $m+1$ distinct vectors of length $\sqrt{m/v}$ in $\mathbb{R}^m$ with mutual angles $\arccos(-1/m)$, that is, a regular simplex. For example, for $m=2$, we have the vertices of an equilateral triangle in the plane (the three vectors have mutual angle $2\pi/3$).

We next consider idempotents $E$ in the adjacency algebra $\A$ of a connected regular graph $\G$ and characterize those for which $\dim \langle J,E\rangle^\circ=2$. We also show that the cliques in the above proof are the cells of an equitable partition.
Note that because the spectral idempotents of $\G$ form a basis of $\A$, an idempotent is in $\A$ if and only if it is a sum of spectral idempotents.

\begin{proposition}
\label{bC}
Let $\G$ be a connected regular graph on $v$ vertices, with adjacency algebra $\A$.
Then $\A$ contains a matrix $K$ that is permutation-similar to $I_{m+1}\otimes J_{v/(m+1)}$ if and only if there is an idempotent $E\in \A$ of rank $m$ such that $EJ=\OO$ and $\dim \langle J,E\rangle^\circ=2$.

Moreover, if so, then the distinct columns of $K$ are the characteristic vectors of an equitable partition of $\G$ into $m+1$ parts of equal size.
\end{proposition}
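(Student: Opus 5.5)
For the ``if'' direction we apply Lemma~\ref{1D}. Suppose $E\in\A$ is an idempotent of rank $m$ with $EJ=\OO$ and $\dim\langle J,E\rangle^\circ=2$. Since $E$ lies in $\A$, it is a polynomial in the symmetric matrix $A$ and is therefore symmetric. Lemma~\ref{1D} then gives $E=\frac{m+1}{v}K-\frac1v J$ with $K\sim I_{m+1}\otimes J_{v/(m+1)}$. Because $\G$ is connected and regular, $J\in\A$ (Hoffman's polynomial), so
\[
K=\frac{v}{m+1}\Big(E+\frac1v J\Big)\in\A .
\]

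For the ``only if'' direction, let $K\in\A$ with $K\sim I_{m+1}\otimes J_{v/(m+1)}$. Then $K^2=\frac{v}{m+1}K$, so $F=\frac{m+1}{v}K$ is an idempotent in $\A$ of rank $m+1$. Since $\G$ is connected, $\frac1vJ$ is a spectral idempotent of $\G$, and we have $FJ=J$. Hence $FJ=JF=J$ and $\frac1vJ\le F$ in the idempotent order. Set
\[
E=F-\frac1vJ=\frac{m+1}{v}K-\frac1vJ .
\]
This is an idempotent in $\A$ because $E^2=F-\frac2vJ+\frac1vJ=E$. Its rank is $\trace E=(m+1)-1=m$, and $EJ=J-J=\OO$. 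Its entries take only the two values $\frac{m+1}{v}-\frac1v=\frac mv$ and $-\frac1v$, so every Hadamard power $E^{\circ k}$ is a linear combination of $K$ and $J-K$. Thus $\langle J,E\rangle^\circ=\Span\{K,J-K\}$, and this space has dimension exactly $2$ provided $m\ge1$, i.e.\ $K\neq J$.

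For the last claim, let $P$ be the $v\times(m+1)$ matrix whose columns are the distinct columns of $K$. These are the characteristic vectors of $m+1$ disjoint sets of size $v/(m+1)$ covering the vertex set. Then $\col P=\col K$, and since $K\in\A$ commutes with $A$, we get $A\col K=\col(AK)=\col(KA)\subseteq\col K$. So $\col P$ is $A$-invariant, and Lemma~\ref{kB}(iii)$\Rightarrow$(i) shows that the partition is equitable.

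The one point needing care is the use of $J\in\A$, which is where connectedness and regularity enter. Everything else is direct computation.
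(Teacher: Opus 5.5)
Your proof is correct and follows the paper's argument: Lemma~\ref{1D} together with $J\in\A$ gives one direction, and direct verification that $E=\frac{m+1}{v}K-\frac1vJ$ has the required properties gives the other. The only difference is in the final claim, where you get $A$-invariance of $\col K$ directly from $AK=KA$, which is slightly shorter than the paper's route of writing $\frac{m+1}{v}K=\sum_{j\in\J}E_j$ and identifying $\col K$ with a sum of eigenspaces; your explicit remarks that $J\in\A$ is needed and that $m\ge1$ is required also make precise points the paper leaves implicit.
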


\begin{proof}
Let $K$ be permutation-similar to $I_{m+1}\otimes J_{v/(m+1)}$ and define
$E=\frac{m+1}v K-\frac1v J$. Then it is easy to show that $E$ is idempotent of rank $m$, with $EJ=\OO$ and $\dim \langle J,E\rangle^\circ=2$.
Moreover, if $K \in \A$, then $E \in \A$. The reverse follows from Lemma~\ref{1D}.

If indeed $K \in \A$, then by the above, $\frac{m+1}v K$ is an idempotent of rank $m+1$, hence it is a sum of spectral idempotents, say $\sum_{j \in \J} E_j$, with $\dim \bigoplus_j \col E_j=\sum_{j \in \J} \rank E_j=m+1=\rank K$. Now $\col K = \col \sum_{j \in \J} E_j \subseteq \bigoplus_j \col E_j$, but because the dimensions are equal, it follows that $\col K =  \bigoplus_j \col E_j$. The latter is clearly invariant under $A$ (in fact, under any matrix in $\A$), so by Lemma~\ref{kB}, the distinct columns of $K$ are the characteristic vectors of an equitable partition.
\end{proof}

A degenerate example of the above is $E=I-\frac1v J$, which is in the adjacency algebra of any connected regular graph of order $v$, and which corresponds to the trivial equitable partition into $v$ parts.

Note that in general, an equitable partition into equal parts does not imply that the corresponding $K$ is in $\A$.
For example, consider the complete bipartite graph $K_{4,4}$ with color classes $\P_0=\{a,b,c,d\}$ and $\P_1=\{a',b',c',d'\}$.
Note that $\spec(K_{4,4})=\{[4]^{1},[0]^{6},[-4]^{1}\}$ and that $\{\P_0,\P_1\}$ is a full indubitable partition. Moreover, $K_{4,4}$ has two equitable partitions into $4$ parts of size $2$ (one is $\{\{a,a'\},\{b,b'\},\{c,c'\},\{d,d'\}\}$, which is indubitable, but not full, and the other is $\{\{a,b\},\{c,d\},\{a',b'\},\{c',d'\}\}$, which is not indubitable).
Both have $K$ permutation-similar to $I_4 \otimes J_2$, which is not in $\langle I,J,(J_2-I_2) \otimes J_4 \rangle$, so $K \notin \A$.

\subsection{Spectral idempotents}

Next, we further specialize and consider spectral idempotents.

\medskip
\begin{proposition}
\label{gDD}
Let $\G$ be a connected $k$-regular graph on $v$ vertices that has an eigenvalue $\lambda$ with multiplicity $m$ and corresponding spectral idempotent $E_{\lambda}$. Let $\E=\langle J,E_{\lambda}\rangle^\circ$.
If $\dim \E=2$, then $\G$ has a full indubitable partition corresponding to $\lambda$.
\end{proposition}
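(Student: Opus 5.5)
First I handle the trivial case. If $\lambda=k$, then connectivity gives $m=1$ and $E_\lambda=\frac1vJ$. Hence $\dim\E=1$, not $2$, so we may assume $\lambda\neq k$. For $\lambda\ne k$ the eigenvectors for $\lambda$ are orthogonal to the all-ones vector, so $E_\lambda J=\OO$.

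The main step is to apply Lemma~\ref{1D} to $E_\lambda$. Since $E_\lambda$ is a symmetric idempotent of rank $m$ with $E_\lambda J=\OO$, the lemma gives $E_\lambda=\frac{m+1}{v}K-\frac1vJ$ with $K\sim I_{m+1}\otimes J_{v/(m+1)}$. Then $K=\frac{v}{m+1}(E_\lambda+E_k)$, where $E_k=\frac1vJ$, so $K\in\A$. By Proposition~\ref{bC}, the distinct columns of $K$ are the characteristic vectors of an equitable partition $\Pi$ of $\G$ into $m+1$ cells of equal size $v/(m+1)$. Moreover, the proof of Proposition~\ref{bC} shows that $\col K=\col E_k\oplus\col E_\lambda$, because $\frac{m+1}{v}K=E_k+E_\lambda$ is a sum of spectral idempotents.

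Next I show that $\Pi$ is indubitable. Let $P$ be the $v\times(m+1)$ characteristic matrix of $\Pi$, so that $AP=PQ$. The matrix $A$ acts on $\col P=\col K$ with only the eigenvalues $k$ (multiplicity $1$) and $\lambda$ (multiplicity $m$), and it acts diagonalizably there since $A$ is symmetric. Restricted to $\col P$, we therefore have $A=kE_k+\lambda E_\lambda$. This gives
\[
AK=(kE_k+\lambda E_\lambda)K=\tfrac{v}{m+1}(kE_k+\lambda E_\lambda),
\]
and hence
\[
AP=\bigl(kE_k+\lambda E_\lambda\bigr)P .
\]
Now compute $E_\lambda P$ and $E_kP$ explicitly. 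Since $KP=\frac{v}{m+1}P$ and $JP=\frac{v}{m+1}J_{v\times(m+1)}$, we get
\[
E_\lambda P=P-\tfrac1{m+1}J_{v\times(m+1)},\qquad E_kP=\tfrac{1}{m+1}J_{v\times(m+1)}.
\]
Substituting,
\[
AP=\lambda P+\tfrac{k-\lambda}{m+1}J_{v\times(m+1)}=P\Bigl(\lambda I+\tfrac{k-\lambda}{m+1}J\Bigr).
\]
So the quotient matrix is $Q=aI+b(J-I)$ with $b=\frac{k-\lambda}{m+1}$ and $a=\lambda+b$. Thus $\Pi$ is indubitable with $m+1$ cells, and it corresponds to $\lambda$, which makes it full.

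The main obstacle is the step of justifying $AP=(kE_k+\lambda E_\lambda)P$. I would handle it most cleanly by writing $A=\sum_\mu \mu E_\mu$ and noting that $E_\mu P=E_\mu K P\cdot\frac{m+1}{v}=0$ for every $\mu\notin\{k,\lambda\}$, since $E_\mu E_\lambda=E_\mu E_k=\OO$. The remaining computation is routine.
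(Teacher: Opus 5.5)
Your proof is correct and follows essentially the paper's route: Lemma~\ref{1D} gives $E_\lambda=\frac{m+1}{v}K-\frac1vJ$, Proposition~\ref{bC} gives the partition, and $AP=P\bigl(\lambda I+\frac{k-\lambda}{m+1}J\bigr)$ follows from $AE_\lambda=\lambda E_\lambda$ and $AJ=kJ$. The paper simply computes $AK=\lambda K+\frac{k-\lambda}{m+1}J$ directly, so the step you flagged as the main obstacle is immediate. Your explicit exclusion of $\lambda=k$, which ensures $E_\lambda J=\OO$ before applying Lemma~\ref{1D}, is a detail the paper leaves implicit.
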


\begin{proof}
By Lemma~\ref{1D}, $E_{\lambda}= \frac{m+1}{v} K - \frac{1}{v} J$,
where
$K$ is permutation-similar to $I_{m+1}\otimes J_{v/(m+1)}$. By Proposition~\ref{bC}, the $m+1$ distinct columns of $K$ are the characteristic vectors of an equitable partition of the vertex set of $\G$. Let $P$ be the $v \times (m+1)$ characteristic matrix of this partition (i.e., its columns are the distinct columns of $K$). Let $Q=\lambda I_{m+1} + \frac{k-\lambda}{m+1}J_{m+1}$.
It then remains to show that $Q$ is the quotient matrix of the partition, i.e., that $AP=PQ$ (see Lemma~\ref{kB}).

Because $K=\frac{v}{m+1}(E_{\lambda}+\frac{1}{v}J)$, we obtain that
$$AK=\tfrac{v}{m+1}(\lambda E_{\lambda} +\tfrac{k}{v}J)= \tfrac{v}{m+1}\big(\lambda (\tfrac{m+1}{v}K-\tfrac{1}{v}J) +\tfrac{k}{v}J\big) = \lambda K + \tfrac{k-\lambda}{m+1}J.$$
Since the columns of $P$ are columns of $K$ and $J_{v\times (m+1)}=PJ_{m+1}$, it thus follows that $AP=\lambda P + \tfrac{k-\lambda}{m+1}J_{v\times(m+1)}=PQ$.
\end{proof}

The final step in proving Theorem~\ref{kD} is to show the reverse of the above proposition by constructing the spectral idempotent $E$ from the full indubitable partition.

\begin{proposition}
\label{pe}
Let $\G$ be a connected $k$-regular graph on $v$ vertices that has an eigenvalue $\lambda$ with multiplicity $m$ and corresponding spectral idempotent $E_{\lambda}$. Let
$\E=\langle J,E_{\lambda}\rangle^\circ$. If $\G$ has a full indubitable partition corresponding to $\lambda$, then $\dim \E=2$.
\end{proposition}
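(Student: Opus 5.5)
Let $\Pi=\{\P_0,\dots,\P_m\}$ be the full indubitable partition with parameters $(a,b)$, where $a-b=\lambda$ and $b=\frac{k-\lambda}{m+1}$. Let $P$ be its $v\times(m+1)$ characteristic matrix. The plan is to exhibit $E_\lambda$ explicitly as $\frac{m+1}{v}K-\frac1v J$ with $K=PP^\top$. Such a matrix has only the two distinct entries $\frac mv$ and $-\frac1v$, which gives $\dim\E=2$. (We cannot have $\dim\E=1$: that would force $E_\lambda$ to be a multiple of $J$, impossible since $\lambda\neq k$ when $m\ge1$.)

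First, the cells all have the same size. Count the edges between two distinct cells $\P_i$ and $\P_j$ in two ways to get $b|\P_i|=b|\P_j|$. Provided $b\neq 0$, every cell then has size $v/(m+1)$. The case $b=0$ means $\lambda=k$, and then connectivity forces $m=1$ and a single cell, so the partition is trivial. I would either dismiss this case as degenerate or note that the definition implicitly assumes $\lambda\ne k$. Hence $K=PP^\top\sim I_{m+1}\otimes J_{v/(m+1)}$ and $P^\top P=\frac{v}{m+1}I_{m+1}$.

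Next, identify the eigenspace. From $AP=PQ$ with $Q=\lambda I+bJ_{m+1}$, every vector $Px$ with $x\perp\mathbf 1$ satisfies $A(Px)=\lambda Px$. These vectors form the space $\{Px: \mathbf 1^\top x=0\}$. Its dimension is $m$, because $P$ has full column rank. Since the partition is full, the multiplicity of $\lambda$ is exactly $m$, so this space is the whole eigenspace $\ker(\lambda I-A)$. This dimension match is the key point where fullness is used, and it is the only place requiring care.

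Finally, compute the orthogonal projection onto this space. The map $x\mapsto \sqrt{\tfrac{m+1}{v}}\,Px$ is an isometry from $\RR^{m+1}$ to $\col P$. The projection onto $\mathbf 1^\perp$ in $\RR^{m+1}$ is $I-\frac1{m+1}J_{m+1}$. Therefore
$$E_\lambda=\tfrac{m+1}{v}P\bigl(I-\tfrac1{m+1}J_{m+1}\bigr)P^\top=\tfrac{m+1}{v}K-\tfrac1vJ,$$
using $PJ_{m+1}P^\top=J$. The entries of $E_\lambda$ are $\frac mv$ on same-cell pairs and $-\frac1v$ otherwise. Hence $J$ and $E_\lambda$ span $\E$, every Hadamard power stays in this span, and $\dim\E=2$.
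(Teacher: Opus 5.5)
Your proof is correct and takes essentially the paper's route. Both arguments set $K=PP^\top$ and $E=\frac{m+1}{v}K-\frac1vJ$, and both use fullness only through the dimension count that identifies $E$ with $E_\lambda$: the paper checks $AE=\lambda E$, $E^2=E$ and $\rank E=m$, while you project directly onto $P(\mathbf 1^\perp)$.

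Your double-counting argument for equal cell sizes supplies a step the paper only asserts. The case $b=0$ is in fact impossible rather than just degenerate. With $b=0$, connectivity allows only one cell, but fullness requires $m+1\ge 2$ cells.
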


\begin{proof}
Let $P$ be the characteristic matrix of the indubitable partition and $Q=\lambda I_{m+1} + \frac{k-\lambda}{m+1}J_{m+1}$ be the corresponding quotient matrix, so that $AP=PQ$.

Define $K=PP^\top$ (note that $K$ is permutation-similar to $I_{m+1}\otimes J_{v/(m+1)}$). Then $$AK=APP^\top=PQP^\top=P(\lambda I_{m+1} + \tfrac{k-\lambda}{m+1}J_{m+1})P^\top=\lambda K +\tfrac{k-\lambda}{m+1}J.$$
Note that $P^\top P=\frac{v}{m+1}I_{m+1}$, $JP=\frac{v}{m+1}J_{v\times(m+1)}$, $K^2=\frac{v}{m+1}K$, and $KJ=JK=\frac{v}{m+1}J$.

We now define $E=\frac {m+1}v K -\frac 1v J$. Then it is routine to show that $AE=\lambda E$, $E^2=E$, and $\rank E= \trace E =m$.
Thus, $E$ is the spectral idempotent $E_{\lambda}$, so $\dim \E=2$.
\end{proof}

\subsection{Proof of Theorem~\ref{kD}}

Theorem~\ref{kD} follows from Lemma~\ref{1D} and Propositions~\ref{gDD}, \ref{pe}.
The above propositions also imply that if a graph with an eigenvalue $\lambda$ of multiplicity $m$ has a full indubitable partition corresponding to $\lambda$
, then this partition is unique.
In Section~\ref{xN}, we will consider graphs with two different indubitable partitions (corresponding to two distinct eigenvalues).

\section{Some graphs with full indubitable partitions}
\label{eF}

In this section, we give some examples of graphs having a full indubitable partition.
We note that if a graph has a full indubitable partition, then this partition is also full indubitable for its complement (assuming that both the graph and its complement are connected).

\subsection{Bipartite graphs and simple eigenvalues}
\label{eC}

It is well known that a connected $k$-regular graph is bipartite if and only if it has $-k$ as an eigenvalue. If so, then this eigenvalue has multiplicity $1$. Clearly, a bipartite $k$-regular graph has an indubitable partition with two cells (i.e., the color classes) with parameters $(0,k)$. Since $0=\lambda+\frac{k-\lambda}{m+1}$ and $k= \frac{k-\lambda}{m+1}$ are satisfied for $\lambda=-k$ and $m=1$, the indubitable partition is full and hence $\dim \langle J,E_{\lambda}\rangle^\circ =2$.
Moreover, by Theorem~\ref{kD}, the spectral idempotent $E_{\lambda}$ corresponding to eigenvalue $-k$ is permutation-similar to $\frac{1}{v}(2I_2-J_2)\otimes J_{v/2}$, which is a well-known expression for $E_{-k}$.

Also walk-regular graphs with a simple eigenvalue $\lambda$ are known to have an indubitable partition into two parts with parameters $(\lambda+\frac{k-\lambda}{2},\frac{k-\lambda}{2})$ \cite[Theorem~3.3]{ErG}, hence this partition is full. Note that a graph is walk-regular if every matrix in the adjacency algebra has a constant diagonal. Here, we will generalize the above by just assuming that the corresponding idempotent $E_{\lambda}$ has a constant diagonal.

\begin{proposition}
\label{eL}
Let $\G$ be a connected $k$-regular graph with a simple eigenvalue $\lambda\ne k$ and corresponding spectral idempotent $E_{\lambda}$.
Then $\G$ has a full indubitable partition corresponding to $\lambda$ if and only if $E_\lambda$ has constant diagonal.
\end{proposition}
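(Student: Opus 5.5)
Since $\lambda$ is simple, $m=1$, and Theorem~\ref{kD} reduces the problem to showing that $\dim\langle J,E_\lambda\rangle^\circ=2$ if and only if $E_\lambda$ has constant diagonal. Take a unit eigenvector $u$ for $\lambda$, so that $E_\lambda=uu^\top$ and $(E_\lambda)_{xy}=u_xu_y$. Because $\G$ is connected and regular with $\lambda\neq k$, $u$ is orthogonal to the all-ones vector $\mathbf 1$, i.e.\ $\sum_x u_x=0$. In particular $E_\lambda J=\OO$.

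For the forward direction, suppose $\G$ has a full indubitable partition for $\lambda$. By Theorem~\ref{kD} (or directly from Lemma~\ref{1D} with $m=1$) we get $E_\lambda=\frac{2}{v}K-\frac1vJ$ with $K\sim I_2\otimes J_{v/2}$. Then every diagonal entry of $E_\lambda$ equals $\frac{2}{v}-\frac1v=\frac1v$, so the diagonal is constant.

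For the converse, suppose the diagonal entries $u_x^2$ are all equal to some $c$. Since $\trace E_\lambda=1$, we have $c=1/v$, hence $u_x=\pm1/\sqrt v$ for every $x$. Consequently every entry $u_xu_y$ of $E_\lambda$ equals $\pm 1/v$. The matrix $E_\lambda$ is not a multiple of $J$, because it is nonzero and has zero row sums. Therefore $E_\lambda$ has exactly two distinct entries, $1/v$ and $-1/v$. It follows that $\langle J,E_\lambda\rangle^\circ$ is spanned by $J$ and $E_\lambda$: the Hadamard square $E_\lambda\circ E_\lambda=\frac1{v^2}J$ lies in that span, and so do all higher Hadamard powers. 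Hence $\dim\E=2$, and Theorem~\ref{kD} gives the full indubitable partition.

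It is also worth recording the partition explicitly, as a sanity check. The two cells are the sign classes of $u$. Since $\sum_x u_x=0$, they have equal size $v/2$. The quotient has the form prescribed by Proposition~\ref{pe}: $Au=\lambda u$ together with regularity gives parameters $a=\frac{k+\lambda}2$ and $b=\frac{k-\lambda}2$.

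I do not expect a serious obstacle. The only step that needs care is the claim that the diagonal constant must be $1/v$ (which uses $\trace E_\lambda=\rank E_\lambda=1$), together with the resulting observation that $E_\lambda$ then has exactly two entry values.
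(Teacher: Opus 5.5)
Your proposal is correct and follows the paper's proof essentially step for step. The forward direction reads the constant diagonal off $E_\lambda=\frac{2}{v}K-\frac1vJ$ from Theorem~\ref{kD}; the converse writes $E_\lambda=uu^\top$, deduces that all $|u_x|$ are equal so the entries of $E_\lambda$ are $\pm$ a single constant, and invokes Theorem~\ref{kD} again. Your remark that $E_\lambda$ is not a multiple of $J$ (being nonzero with zero row sums) usefully makes explicit that $\dim\E$ is exactly $2$, which the paper leaves implicit, while fixing the diagonal constant to $1/v$ via the trace is harmless but not needed.
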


\begin{proof}
Assume first that $\G$ has a full indubitable partition corresponding to $\lambda$. Then, by Theorem~\ref{kD}, $E_{\lambda}$ has constant diagonal.

On the other hand, assume that $E_{\lambda}$ has a constant diagonal. Let $u$ be a normalized eigenvector for $\lambda$. Then $E_{\lambda}=uu^\top$ (as $m=1$), which has diagonal entries $u_i^2$ ($i=1,2,\dots,v$). Thus, $u_i=\pm c$ for some $c$ and all $i$, which implies that $E_\lambda$ has entries $\pm c^2$ only, and hence $\dim \E=2$. By Theorem~\ref{kD}, the result follows.
\end{proof}

\subsection{Association schemes}\label{sec:as}

If $\G$ is a connected graph that is one of the relations in an association scheme and it has a full indubitable partition corresponding to some eigenvalue with multiplicity $m$, then the Bose-Mesner algebra (which contains the adjacency algebra of $\G$) contains a matrix that is permutation-similar to $I_{m+1} \otimes J_{v/(m+1)}$. This implies that the association scheme is imprimitive. Because the partition is indubitable, the corresponding quotient graph of $\G$ is complete, which implies that the quotient scheme is trivial, see Section~\ref{iMp}.

\subsubsection{Distance-regular graphs}

The above implies that if $\G$ is distance-regular with diameter $d$, valency at least $3$, and a full indubitable partition, then it is bipartite or antipodal \cite[Thm.~4.2.1]{BCN}. Moreover, the only matrices in the adjacency algebra that are permutation-similar to $I_{m+1} \otimes J_{v/(m+1)}$ for some $m$ with $0<m<v-1$ are $\sum_{i=0}^{\lfloor d/2 \rfloor}A_{2i}$ (in case $\G$ is bipartite) and $I+A_d$ (in case $\G$ is antipodal) \cite[p.140]{BCN}. We already saw in Section~\ref{eC} that all bipartite graphs have a full indubitable partition. For antipodal distance-regular graphs with diameter $d$, the distance-$d$ graph $\G_d$ is a disjoint union of cliques, but the partition is indubitable only for $d=2$ and $d=3$ (because only then the quotient graph of $\G$ is complete). We thus have the following classification.

\begin{proposition}\label{p_drg}
Let $\G$ be a distance-regular graph of order $v$, with valency $k >2$, diameter $d \ge 2$, with a full indubitable partition corresponding to an eigenvalue $\lambda$ of $\G$ with multiplicity $m$. Then $\G$ is bipartite and $\lambda=-k$, or $\G$ is a complete $(m+1)$-partite graph and $\lambda=-v/(m+1)$, or $\G$ is a distance-regular antipodal cover of a complete graph $K_{m+1}$ and $\lambda=-1$ (with $d=3$).
\end{proposition}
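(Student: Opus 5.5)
By Theorem~\ref{kD}, the full indubitable partition gives a matrix $K=\frac{v}{m+1}(E_\lambda+\frac1v J)$ in the adjacency algebra $\A$. This $K$ is permutation-similar to $I_{m+1}\otimes J_{v/(m+1)}$ and is a $01$-matrix. Because $\G$ is distance-regular, $K$ lies in the Bose-Mesner algebra, so it is a sum $\sum_{i\in\I}A_i$ of distance matrices. The plan is to argue that this forces the structure to be one of a small number of cases, and then identify $\lambda$ in each case from the quotient matrix.

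First, the case $m=v-1$ has to be excluded. There $K=I$, so $E_\lambda=I-\frac1vJ$, and $\lambda$ would be the only nontrivial eigenvalue. Then $\G$ would be complete, contradicting $d\ge 2$. Also $m\ge 1$, since $\lambda\neq k$ for a genuine partition. Hence $0<m<v-1$, the scheme is imprimitive, and by \cite[Thm.~4.2.1]{BCN} (using $k>2$) $\G$ is bipartite or antipodal. As recalled above, $K$ must then equal $\sum_{i}A_{2i}$ (bipartite case) or $I+A_d$ (antipodal case).

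In the bipartite case, the cells are the two colour classes, so $m+1=2$. The quotient is $\begin{pmatrix}0&k\\k&0\end{pmatrix}$, which gives $\lambda=a-b=-k$.

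In the antipodal case, the cells are the antipodal classes (fibres). The partition is indubitable only if every pair of distinct fibres is joined by edges, each vertex having exactly $b$ neighbours in each other fibre. Equivalently, the folded (quotient) graph must be complete. For an antipodal distance-regular graph the folded graph has diameter $\lfloor d/2\rfloor$, so completeness forces $d\in\{2,3\}$. This is the step I expect to need the most care: justifying that the quotient graph is the folded graph, with diameter $\lfloor d/2\rfloor$. I will cite \cite[\S4.2]{BCN} for this rather than reprove it.

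For $d=2$, antipodality means $\G_2$ is a disjoint union of cliques, so $\G$ is complete multipartite with $m+1$ parts of size $v/(m+1)$. Then $a=0$ and $b=v/(m+1)$, which gives $\lambda=-v/(m+1)$.

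For $d=3$, $\G$ is an antipodal cover of the complete graph $K_{m+1}$ on the fibres. Each vertex has exactly one neighbour in every other fibre and none in its own, so $a=0$, $b=1$ and $\lambda=-1$.

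Finally, in each case the multiplicity of $\lambda$ equals the number of cells minus one. This follows from Theorem~\ref{kD}, since the partition is full by hypothesis. It is consistent with the known spectra: $-k$ is simple, the eigenvalue $-v/(m+1)$ of $K_{(m+1)\times t}$ has multiplicity $m$, and $-1$ of an antipodal $3$-cover of $K_{m+1}$ has multiplicity $m$. This completes the plan.
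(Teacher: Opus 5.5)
Your proposal is correct and follows essentially the same route as the paper. In both, $K\in\A$ with $0<m<v-1$ forces imprimitivity, so $\G$ is bipartite or antipodal with $K=\sum_i A_{2i}$ or $K=I+A_d$; in the antipodal case, indubitability requires the folded (quotient) graph to be complete, which happens only for $d\in\{2,3\}$. The paper leaves two details to the reader that you make explicit: excluding $m=v-1$ (which would make $\G$ complete), and reading off $\lambda=a-b$ from the quotient matrix in each case.
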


For completeness, we also mention that the complete graph has a (trivial) full indubitable partition into $v=m+1$ parts.

The only cycles (distance-regular graphs with valency $2$) that can have a full indubitable partition are the even cycles (which are bipartite) and the cycles $C_{3n}$, with $n \in \mathbb{N}$, because all eigenvalues have multiplicity at most $2$. The latter have a full indubitable partition into $3$ parts with parameters $(0,1)$, corresponding to eigenvalue $-1$. It also follows that $C_{6n}$, with $n \in \mathbb{N}$ therefore has two full indubitable partitions; more on this in Section~\ref{xN}.

\subsubsection{Three-class association schemes}
Let us next consider $3$-class association schemes. In the generic case (i.e., it is not amorphic, nor is one of the relations a complete multipartite graph), it has as one of its relations a connected graph $\G$ with four distinct eigenvalues. In this case, $\G$ generates the association scheme in the sense that its adjacency algebra is the Bose-Mesner algebra of the scheme (because their dimensions are the same), and hence the spectral idempotents of $\G$ are the primitive idempotents of the scheme.

If the association scheme is imprimitive, then one of the other two relations is a disjoint union of, $m+1$ say, cliques (i.e., $|\I|=2$, for otherwise $\G$ would be complete multipartite). Now the corresponding quotient scheme is trivial\footnote{The quotient scheme is trivial unless it is a $2$-class scheme, in which case the quotient graphs in $\{\widetilde{B_h} \mid h \notin \I \}$ form a pair of complementary strongly regular graphs, so the original $3$-class scheme is the wreath product of a $2$-class scheme and a $1$-class scheme, and $\G$ is the so-called coclique-extension of a strongly regular graph.} (i.e., $|\J|=2$) if and only if the corresponding partition is indubitable for $\G$ with parameters $(0,k/m)$, where $k$ is the valency of $\G$. Moreover, in this case one of the primitive idempotents of the scheme is of the required form (cf. Theorem~\ref{kD}), see Section~\ref{iMp}, so the indubitable partition is full, corresponding to eigenvalue $-k/m$.

On the other hand, let $\G$ be a connected $k$-regular graph with four distinct eigenvalues that has an indubitable partition with parameters $(0,k/m)$ (a so-called regular coloring). Then it was shown in \cite[Thm.~5.7]{Etc} that $\G$ generates a $3$-class association scheme if $-k/m$ is an eigenvalue of $\G$ with multiplicity $m$. We may thus conclude the following.

\begin{proposition}\label{prop:4evas}
Let $\G$ be a connected $k$-regular graph with four distinct eigenvalues and an indubitable partition $\Pi$ with parameters $(0,b)$. Then $\G$ generates an imprimitive $3$-class association scheme in which one of the other relations is a disjoint union of $k/b+1$ cliques if and only if the indubitable partition $\Pi$ is full.
\end{proposition}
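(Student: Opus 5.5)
Both directions are essentially assembled from the discussion preceding the statement. Let $\Pi$ have $r+1$ cells and parameters $(0,b)$. Then $k=rb$, so $r=k/b$ and the quotient has eigenvalue $-b=-k/r$ with multiplicity $r$.

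\emph{Forward direction.} Suppose $\G$ generates an imprimitive $3$-class association scheme in which one of the other relations, say $\G'$ with adjacency matrix $B$, is a disjoint union of $k/b+1$ cliques. Since $\G$ has four distinct eigenvalues, its adjacency algebra is the Bose-Mesner algebra, so $K:=I+B\in\A$. Here $K$ is permutation-similar to $I_{m'+1}\otimes J_{v/(m'+1)}$ with $m'+1=k/b+1$ cells.

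As in Section~\ref{iMp}, the cells of $K$ form a uniformly equitable partition for $\G$. Because $\G$ is connected and $\G$, $\G'$ and the third relation partition the non-edges of $K$, there are no $\G$-edges inside cells. So the partition has parameters $(0,\cdot)$ for $\G$. One must then check that this clique partition coincides with $\Pi$, or at least is indubitable.

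The footnote applies here: the quotient scheme is trivial unless $\G$ is a coclique-extension of a strongly regular graph. I expect this to be the main obstacle, and would handle it as follows. Any partition with parameters $(0,b)$ into $k/b+1$ cells forces the quotient to be complete. In a coclique-extension, however, two cells joined by $\G$ are joined completely, so $\G$ would have to be complete multipartite, which has only three eigenvalues — a contradiction.

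Alternatively, and more simply, I would use $K\in\A$ together with Proposition~\ref{bC}. The matrix $E=\frac{m'+1}{v}K-\frac1vJ$ is an idempotent in $\A$ of rank $m'$. Once $|\J|=2$ is established, $E$ is a single spectral idempotent, and Theorem~\ref{kD} shows that the clique partition is full indubitable. Its parameters are $(0,k/m')$, and its eigenvalue $-k/m'=-b$ has multiplicity $m'=r$, which is exactly the fullness of $\Pi$. The remaining point is to identify this partition with $\Pi$. Both consist of equal-size cells; in a $3$-class scheme with one of the relations equal to $\G$, the eigenvalue $-b$ has a unique idempotent. Fullness of $\Pi$ then follows because the number of cells of $\Pi$ is $k/b+1$, matching the multiplicity $m'$ plus one.

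\emph{Reverse direction.} Suppose $\Pi$ is full, so $-b=-k/r$ is an eigenvalue of multiplicity $r=k/b$. By \cite[Thm.~5.7]{Etc}, $\G$ generates a $3$-class association scheme. By Theorem~\ref{kD}, $K=PP^\top$ lies in $\A$, which is the Bose-Mesner algebra, so $K-I$ is a $01$-matrix in the Bose-Mesner algebra. It is disjoint from $A$ because the cells of $\Pi$ are cocliques, since the parameter $a$ equals $0$.

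A $01$-matrix in the Bose-Mesner algebra is a sum of relations. The matrix $J-I-A$ splits into exactly two relations, and $K-I$ is a proper nonzero part of it (nonzero since $v>r+1$, as $\G$ has four eigenvalues). Hence $K-I$ is one of the other relations. It is a disjoint union of $r+1=k/b+1$ cliques, and the scheme is imprimitive.
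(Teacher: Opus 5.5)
\textbf{The gap: the step $|\J|=2$ in the forward direction.} Both of your routes need the quotient scheme to be trivial, i.e.\ $|\J|=2$, and the ``alternative'' route simply assumes it. Your argument excluding the coclique-extension case combines facts about two different partitions. The quotient of $\Pi$ is complete, whereas in a coclique extension it is the cliques of $B$ that are joined either completely or not at all. At that point nothing shows that $\Pi$ is the clique partition. The identification you offer later (uniqueness of the idempotent, i.e.\ of the full indubitable partition, for $-b$) already presupposes $|\J|=2$, so it cannot be used here. As written, the step is unjustified, and it becomes circular if you lean on the later identification.

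\textbf{How to repair it.} In the wreath case, $A=\widetilde B\otimes J_c$, where $\widetilde B$ is a connected non-complete strongly regular graph on $n=k/b+1$ vertices. Its eigenvalues are $k_1,r_1,s_1$, with multiplicities $1,f,g$ and $f+g=n-1$. So $\G$ has eigenvalues $ck_1$, $cr_1$ (multiplicity $f$), $cs_1$ (multiplicity $g$) and $0$ (multiplicity $n(c-1)$), and having four distinct eigenvalues forces $r_1,s_1\neq 0$. But the quotient $b(J-I)$ of $\Pi$ shows that $-b$, which is neither $k$ nor $0$, has multiplicity at least $r=n-1=f+g$. This is impossible since $f,g\ge 1$.

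There is also a combinatorial way to see this. Vertices in one clique of $B$ have the same $\G$-neighbourhood. Since the cells of $\Pi$ are cocliques and $b>0$, every cell of $\Pi$ is a union of such cliques, hence by size exactly one. Then $b\in\{0,c\}$ forces $\widetilde B$ to be complete, a contradiction.

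With the wreath case excluded, the rest of your argument works. The clique partition is full with eigenvalue $-k/m'=-b$ of multiplicity $m'=r$, so $\Pi$ is full by counting cells, and no identification of the two partitions is needed.

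\textbf{Comparison with the paper.} The paper's discussion phrases this direction as ``the quotient scheme is trivial iff the clique partition is indubitable''. That is, it effectively takes the cliques to be the cells of $\Pi$, in which case the step is immediate. Your more general reading is exactly what requires the extra argument above.

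\textbf{Reverse direction.} This matches the paper's, resting on the same Thm.~5.7 of the cited work. However, ``proper'' is not justified. If $K-I$ were all of $J-I-A$, then $\G$ would be the complete multipartite graph $J-K$, which has only three eigenvalues. The same remark covers the possibility that $A$ is a sum of two relations rather than a single one, as happens for the grid graph in Theorem~\ref{xQ}.
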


\subsubsection{Uniform association schemes} \label{sec:uniform}
In his paper on $4$-class imprimitive association schemes, Higman \cite{H4} defined uniform association schemes.
Informally, an association scheme is called uniform if it is imprimitive with a partition of the vertices into cells such that all intersection numbers are spread uniformly over the cells. In particular, this means that the partition is indubitable for all (connected) relations of the scheme. All bipartite schemes and all cometric Q-antipodal schemes are uniform, for example. Van Dam, Martin, and Muzychuk \cite{DMM} showed that in a uniform association scheme on $v$ vertices, there is a primitive idempotent $E$ such that $\frac1v J+E$ is permutation-similar to $I_{m+1} \otimes J_{v/(m+1)}$ for some $m$ (among other results about the other primitive idempotents), and hence the indubitable partition is full for all relations that generate the scheme. Prominent examples are obtained from so-called linked systems of symmetric designs and linked systems of strongly regular designs (such as a linked system of Van Lint-Schrijver partial geometries). For many more examples, we refer to \cite{DMM}.

\subsection{The Cartesian product of a regular graph and a complete graph}
\label{7A}

The Cartesian product of an $\ell$-regular graph and a complete graph has an indubitable partition with parameters $(\ell,1)$. Under some conditions, this partition is full.

\begin{proposition}
\label{cB}
Let $C$ be the adjacency matrix of a connected $\ell$-regular graph on $m$ vertices and let $\lambda=\ell-1$, with $\ell >1$.
Let $\G$ be the graph with adjacency matrix
$$
A=I_{m+1}\otimes C + (J_{m+1}-I_{m+1})\otimes I_m,
$$
i.e., it is the Cartesian product of $\G(C)$ and a complete graph on $m+1$ vertices.
If $m+1>2\ell$, then $\lambda$ is an eigenvalue of $\G$ with multiplicity $m$ and $\G$ has a full indubitable partition corresponding to $\lambda$.
\end{proposition}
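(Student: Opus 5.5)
The plan is to compute the spectrum of $A$ from the Kronecker structure and then identify the partition explicitly. Since $A=I_{m+1}\otimes C+(J_{m+1}-I_{m+1})\otimes I_m$ is the Cartesian product of $\G(C)$ and $K_{m+1}$, its eigenvalues are all sums $\mu+\nu$. Here $\mu$ runs over the spectrum of $C$ and $\nu\in\{m,-1\}$; the eigenvalue $-1$ of $K_{m+1}$ has multiplicity $m$. The eigenvalue $\lambda=\ell-1$ arises as $\ell+(-1)$, with multiplicity $1\cdot m=m$, because $\ell$ is a simple eigenvalue of $C$ by connectivity. We must also exclude any other way of obtaining $\ell-1$.

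The first alternative is $\mu+m=\ell-1$, which requires $\mu=\ell-1-m$. Since $m+1>2\ell$ gives $\ell-1-m<-\ell$, while every eigenvalue of $C$ satisfies $|\mu|\le\ell$, this is impossible. The second alternative is $\mu-1=\ell-1$ with $\mu\neq\ell$ an eigenvalue of $C$, i.e.\ a second eigenvalue equal to $\ell$. This is excluded because $\ell$ is simple for a connected graph. Hence $\lambda$ has multiplicity exactly $m$. This counting step is where the hypothesis $m+1>2\ell$ is used, and it is the only delicate point.

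Next we exhibit the partition. Take the $m+1$ cells $\P_i=\{i\}\times V(C)$, the copies of $\G(C)$. A vertex in $\P_i$ has $\ell$ neighbours inside $\P_i$ (via $C$) and exactly one neighbour in each other cell $\P_j$ (via $(J-I)\otimes I_m$). Thus the partition is indubitable with parameters $(a,b)=(\ell,1)$ and has $m+1$ cells.

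Its quotient matrix $\ell I+(J-I)$ has eigenvalue $a-b=\ell-1=\lambda$ with multiplicity $m$, consistent with $b=\frac{k-\lambda}{m+1}$ for $k=\ell+m$. Since the number of cells is $m+1$ and $m$ is the multiplicity of $\lambda$ established above, the partition is full. By Theorem~\ref{kD} we then also get $\dim\E_\lambda=2$, although this is not needed for the statement.
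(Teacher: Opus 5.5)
Your proof is correct and follows the paper's argument. You compute the spectrum of the Cartesian product as sums of eigenvalues, rule out $\theta+m=\ell-1$ via $\theta\ge-\ell$ and $m+1>2\ell$, and read off the $(\ell,1)$ indubitable partition into the $m+1$ copies of $\G(C)$. You are slightly more explicit than the paper in noting that the multiplicity is exactly $m$ because $\ell$ is a simple eigenvalue of the connected graph $\G(C)$.
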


\begin{proof}
By construction, $\G$ has an indubitable partition into $m+1$ parts with parameters $(\ell,1)$.
The eigenvalues of the Cartesian product $\G$ are all possible sums of an eigenvalue of $C$ and an eigenvalue of the complete graph $K_{m+1}$ \cite[\S1.4.6]{BH}. If $\theta$ is an eigenvalue of $C$, then the corresponding eigenvalues $\theta+m$ and $\theta-1$ of $\G$ are equal to $\lambda$ only if $\theta=\ell-1-m$ or $\theta=\ell$. Since $K_{m+1}$ has eigenvalue $-1$ with multiplicity $m$, the second case indeed occurs with multiplicity $m$.
Because $\theta \ge -\ell$ and $m+1>2\ell$ (by assumption), the first case cannot occur, though.
It thus follows that $\G$ has an eigenvalue $\lambda=\ell-1$ of multiplicity $m$, which shows that the indubitable partition corresponding to $\lambda$ is full.
\end{proof}

\begin{corollary}
\label{eB}
Let $m\ge 4$ and $\lambda=1$. Let $\G$ be the Cartesian product of a cycle of length $m$ and a complete graph on $m+1$ vertices.
Then $\lambda$ is an eigenvalue of $\G$ with multiplicity $m$ and $\G$ has a full indubitable partition corresponding to $\lambda$.
\end{corollary}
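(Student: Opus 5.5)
This is a direct application of Proposition~\ref{cB} with $C$ the adjacency matrix of the cycle $C_m$. I will check that its hypotheses hold, which makes the conclusion immediate.

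First, the cycle $C_m$ with $m\ge 4$ (indeed $m\ge 3$ suffices) is a connected $2$-regular graph on $m$ vertices. So we take $\ell=2$, which satisfies $\ell>1$, and Proposition~\ref{cB} produces the eigenvalue $\lambda=\ell-1=1$, as in the statement.

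Second, the condition $m+1>2\ell$ becomes $m+1>4$, that is, $m\ge 4$, which is exactly the hypothesis of the corollary. Proposition~\ref{cB} then gives that $\lambda=1$ is an eigenvalue of the Cartesian product $\G$ with multiplicity exactly $m$. It also gives that the indubitable partition into $m+1$ copies of $C_m$, with parameters $(2,1)$, is full.

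As a sanity check on why the bound matters, consider the failure mode of Proposition~\ref{cB}: an eigenvalue $\theta=\ell-1-m=1-m$ of $C_m$ could combine with the eigenvalue $m$ of $K_{m+1}$ to add extra multiplicity to $1$. The eigenvalues of $C_m$ are $2\cos(2\pi j/m)\ge -2$, so $\theta=1-m$ would force $m\le 3$. The bound $m\ge 4$ therefore rules this out. For $m=3$ one has $\theta=-2$, which is not an eigenvalue of the odd cycle $C_3$, so the statement could even be extended slightly. Nothing here is a real obstacle; the only step needing care is matching $\ell=2$ with the inequality $m+1>2\ell$.
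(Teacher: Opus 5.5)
Your proposal is correct and matches the paper's proof, which is likewise just Proposition~\ref{cB} applied to $C_m$ with $\ell=2$: this gives $\lambda=\ell-1=1$, and the condition $m+1>2\ell$ is equivalent to $m\ge 4$. Your side remark that $m=3$ also works is right as well, since $-2$ is not an eigenvalue of $C_3=K_3$; note that this case goes beyond what Proposition~\ref{cB} itself delivers.
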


\begin{proof}
Immediate from Proposition~\ref{cB}.
\end{proof}

\section{Two indubitable partitions}
\label{xN}

In this section, we study the combinatorial structure of graphs that have indubitable partitions corresponding to two distinct eigenvalues.

\begin{proposition}
\label{xO}
Let $\G$ be a connected $k$-regular graph on $v$ vertices.
Assume that $\G$ has
full indubitable partitions  corresponding to eigenvalues $\lambda$ and $\lambda'$ of $\G$ with multiplicities $m$ and $m'$, respectively.
Let $E_{\lambda}$ and $E_{\lambda'}$ be the corresponding spectral idempotents for the eigenvalues $\lambda$ and $\lambda'$, and let $K$ and $K'$ be such that
$E_{\lambda}= \frac{m+1}{v} K - \frac{1}{v} J$ and $E_{\lambda'}= \frac{m'+1}{v} K' - \frac{1}{v} J$. Then $(m+1)(m'+1) \mid v$, and $K$ and $K'$ are simultaneously permutation-similar to
$$
J_{m'+1}\otimes I_{m+1}\otimes J_{v/((m+1)(m'+1))}
\qquad \text{and}\qquad
I_{m'+1}\otimes J_{m+1}\otimes J_{v/((m+1)(m'+1))}.
$$
\end{proposition}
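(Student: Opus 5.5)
The plan is to use that distinct spectral idempotents are orthogonal: since $\lambda\neq\lambda'$, we have $E_\lambda E_{\lambda'}=\OO$. Substituting the expressions from Theorem~\ref{kD}, and using $KJ=\frac{v}{m+1}J$, $K'J=\frac{v}{m'+1}J$ and $J^2=vJ$, the identity
$$
\Big(\tfrac{m+1}{v}K-\tfrac1vJ\Big)\Big(\tfrac{m'+1}{v}K'-\tfrac1vJ\Big)=\OO
$$
expands to $\frac{(m+1)(m'+1)}{v^2}KK'-\frac1vJ-\frac1vJ+\frac1vJ=\OO$. Hence
$$
KK'=\frac{v}{(m+1)(m'+1)}J .
$$
This is the key combinatorial identity.

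Next we interpret it. Let $P$ and $P'$ be the characteristic matrices of the two partitions $\Pi=\{\P_0,\dots,\P_m\}$ and $\Pi'=\{\P'_0,\dots,\P'_{m'}\}$, so that $K=PP^\top$ and $K'=P'P'^\top$. The $(x,y)$ entry of $KK'$ equals $|\P(x)\cap\P'(y)|$, where $\P(x)$ is the cell of $\Pi$ containing $x$ and $\P'(y)$ is the cell of $\Pi'$ containing $y$. Since $x$ and $y$ range over all vertices, every cell of $\Pi$ meets every cell of $\Pi'$ in exactly $c:=\frac{v}{(m+1)(m'+1)}$ vertices. Because $c$ is the cardinality of a set, it is a nonnegative integer. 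It is positive, since the cells $\P_i\cap\P'_j$ partition $X$ and $v>0$. This gives $(m+1)(m'+1)\mid v$.

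For the simultaneous permutation-similarity, we order the vertices lexicographically by the pair $(j,i)$, where the vertex lies in $\P'_j\cap\P_i$, with $j$ as the outer index, $i$ as the middle index, and an arbitrary order inside each block of size $c$. In this ordering, vertices $x$ and $y$ have $K_{xy}=1$ exactly when their $i$-indices agree, which gives $J_{m'+1}\otimes I_{m+1}\otimes J_c$. Likewise $K'_{xy}=1$ exactly when their $j$-indices agree, which gives $I_{m'+1}\otimes J_{m+1}\otimes J_c$. A single permutation realizes both forms.

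The only step needing care is the expansion of $E_\lambda E_{\lambda'}$, in particular tracking the $J$-terms correctly; I expect no real obstacle. The fact that the intersection sizes are constant follows directly from the entries of $KK'$ being constant.
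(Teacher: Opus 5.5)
Your proposal is correct and takes essentially the same route as the paper. Both arguments expand $E_\lambda E_{\lambda'}=\OO$ to obtain $KK'=\frac{v}{(m+1)(m'+1)}J$, then read off that every cell of one partition meets every cell of the other in exactly that many vertices, and finally order the vertices with the $\lambda'$-cell as the outer index and the $\lambda$-cell as the middle index, which gives both Kronecker forms under a single permutation.
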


\begin{proof}
Let $\{\P_i\}_{i=0}^{m}$ and
$\{\R_j\}_{j=0}^{m'}$ be the indubitable partitions corresponding to the eigenvalues $\lambda$ and $\lambda'$, respectively, and let $\ell=\frac{v}{(m+1)(m'+1)}$. Recall that $KJ=\frac{v}{m+1}J$ and $JK'=\frac{v}{m'+1}J$. Then we have that
$$v^2E_{\lambda}E_{\lambda'}=((m+1) K - J)
((m'+1) K' -  J)
=(m+1)(m'+1) KK'-vJ.$$
Because $E_{\lambda}E_{\lambda'}=\OO$, it follows that
$KK'=\ell J$.

Pick $i,j$ $(0\le i\le m,~0\le j\le m')$ and arbitrary $x\in\P_i$ and $y\in\R_j$. Note that, $|\P_i\cap\R_j|=(K K')_{xy}=\ell$, hence it is clear that $(m+1)(m'+1)\mid v$.
Next, we rename the vertices in such a way that the $\ell$ vertices in $\P_i \cap \R_j$ are $\{ j\ell(m+1) + i\ell + h \}_{h=1}^{\ell}$. Then $\R_j = \{ j\ell(m+1)+k \}_{k=1}^{\ell(m+1)}$, which yields that (after this reordering) $K' = I_{m'+1} \otimes J_{\ell(m+1)} = I_{m'+1} \otimes J_{m+1} \otimes J_{\ell}$. Moreover, $$
\P_i=\{j\ell(m+1)+i\ell+1,j\ell(m+1)+i\ell+2,\ldots,j\ell(m+1)+i\ell+\ell \mid 0\le j\le m'\}$$ and the result follows.
\end{proof}

\subsection{Four eigenvalues}\label{sec:4ev}

Terwilliger \cite{Ter} showed that the local graphs of thin $Q$-polynomial distance-regular graphs are co-edge-regular with at most five distinct eigenvalues. Note that all imprimitive $Q$-polynomial distance-regular graphs are thin (see \cite[\S5.6]{DKT}). As it turns out, the Cartesian products of two complete graphs (also called grid graphs) are co-edge-regular with 3 or 4 distinct eigenvalues, and they are indeed the local graph of some thin $Q$-polynomial distance-regular graphs, namely the Johnson graphs. Gebremichel, Cao, and Koolen \cite{GCK} obtained several combinatorial characterizations of grid graphs.

It is also clear that grid graphs have two indubitable partitions.
Note that by Proposition~\ref{p_drg}, a strongly regular graph cannot have two full indubitable partitions. However, if the number of eigenvalues is four, then the indubitable partitions are full, as we shall see next.

\begin{theorem}
\label{xQ}
Let $\G$ be a connected $k$-regular graph on $v$ vertices with four distinct eigenvalues. Let $\lambda$ and $\lambda'$ be two of these eigenvalues, with multiplicities $m$ and $m'$, and corresponding spectral idempotents $E_{\lambda}$ and $E_{\lambda'}$, respectively. Let $\E_{\lambda}=\langle J,E_{\lambda}\rangle^\circ$ and $\E_{\lambda'}=\langle J,E_{\lambda'}\rangle^\circ$. Then the statements {\rm(i)--(iv)} are equivalent.

\begin{enumerate}[label=\rm(\roman*)]
\item $\dim \E_{\lambda}=\dim \E_{\lambda'}=2$,
\item $\G$ has full indubitable partitions corresponding to $\lambda$ and $\lambda'$,
\item  $\G$ or its complement is a co-edge-regular $(v,k,\mu)$-graph with $v=(m+1)(m'+1)$, $k=m+m'$, $\mu=2$, such that any two adjacent vertices have $m-1$ or $m'-1$ common neighbors,
\item $\G$ or its complement is the Cartesian product of $K_{m+1}$ and $K_{m'+1}$.
\end{enumerate}
Moreover, if the above statements hold, then $m \neq m'$ and $\G$ generates a $3$-class association scheme with Bose-Mesner algebra $\langle I \rangle + \E_{\lambda}+\E_{\lambda'}$.
\end{theorem}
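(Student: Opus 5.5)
(i)$\Leftrightarrow$(ii) is Theorem~\ref{kD} applied to $\lambda$ and to $\lambda'$. For (iv)$\Rightarrow$(ii) we take $\G=K_{m+1}\square K_{m'+1}$. Its eigenvalues are $m+m'$, $m'-1$ (multiplicity $m$), $m-1$ (multiplicity $m'$) and $-2$ (multiplicity $mm'$). The rows and the columns of the grid are indubitable partitions into $m'+1$ and $m+1$ cells, with parameters $(m,1)$ and $(m',1)$ respectively. Their quotient eigenvalues are $m-1$ of multiplicity $m'$ and $m'-1$ of multiplicity $m$, so both partitions are full. Four distinct eigenvalues force $m\neq m'$; this gives the claim $m\ne m'$. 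The complement case follows from the remark at the start of Section~\ref{eF}, since full indubitable partitions are preserved under complementation. The implication (iii)$\Leftrightarrow$(iv) I would prove directly. A co-edge-regular graph with these parameters has every vertex $x$ with $k=m+m'$ neighbours, and the edges split into ``type $m$'' and ``type $m'$'' according to $\lambda$-values. Using $\mu=2$ one shows that the maximal cliques through $x$ are one $(m+1)$-clique and one $(m'+1)$-clique, and then counting with $v=(m+1)(m'+1)$ recovers the grid. Alternatively, (iii) can be routed through (ii), as explained next.

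The core is (ii)$\Rightarrow$(iv), or (ii)$\Rightarrow$(iii). By Proposition~\ref{xO}, after a permutation $K=J_{m'+1}\otimes I_{m+1}\otimes J_\ell$ and $K'=I_{m'+1}\otimes J_{m+1}\otimes J_\ell$, with $\ell=v/((m+1)(m'+1))$. Let the eigenvalues be $k,\lambda,\lambda',\theta$. Then
$$I=\tfrac1vJ+E_\lambda+E_{\lambda'}+E_\theta,$$
so
$$E_\theta=I-\tfrac{m+1}{v}K-\tfrac{m'+1}{v}K'+\tfrac1vJ,$$
and
$$A=kE_0+\lambda E_\lambda+\lambda'E_{\lambda'}+\theta E_\theta$$
is an explicit linear combination of $I$, $K$, $K'$ and $J$. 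Since $A$ is a $01$-matrix with zero diagonal, I then compare entries on the four ``types'' of pairs: the same vertex, the same cell of $K$ and $K'$ but distinct, the same $K$-cell only, the same $K'$-cell only, and neither. The pairs lying in both cells exist only if $\ell>1$, and there the entry must be $0$. The ``neither'' entry must also be $0$ or $1$.

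The main obstacle is ruling out the degenerate possibilities, for instance $\ell>1$ or the pattern in which the ``neither'' entry is $1$. For the latter I would pass to the complement, which is what the clause ``$\G$ or its complement'' suggests. I expect $\ell>1$ to force either a repeated eigenvalue or a disconnected graph, or $A$ being complete multipartite-like, which has fewer than four eigenvalues. The concrete tools here are the row sums $k$ and $\trace A=0$, and the fact that the coefficients of $I,K,K',J$ determine the four distinct eigenvalues $k,\lambda,\lambda',\theta$. Once $\ell=1$ and $A$ (or its complement) has entry $1$ exactly on the same-$K$-cell and same-$K'$-cell pairs, we have $A=K+K'-2I$, which is the grid $K_{m+1}\square K_{m'+1}$. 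Finally, $\langle I\rangle+\E_\lambda+\E_{\lambda'}=\langle I,K,K',J\rangle$ is closed under both products: $KK'=\ell J$ and $K^2=\tfrac{v}{m+1}K$ handle ordinary products, and $K\circ K'=I$ (using $\ell=1$) handles entrywise products. It contains $A$ and has dimension $4$, so it is the Bose-Mesner algebra of a $3$-class scheme generated by $\G$.
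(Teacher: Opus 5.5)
\textbf{Gap in (iii)$\Rightarrow$(iv).} Your direct route rests entirely on the sentence ``using $\mu=2$ one shows that the maximal cliques through $x$ are one $(m+1)$-clique and one $(m'+1)$-clique''. You give no argument for it, and it cannot follow from the parameters in (iii) alone. The Shrikhande graph has $v=16=(3+1)(3+1)$, $k=6=3+3$, $\mu=2$, and every edge lies in exactly $2=m-1$ triangles. Yet its local graphs are hexagons, so its maximal cliques are triangles. So the four-eigenvalue hypothesis must enter in an essential way, and your proposal never says how. The fallback ``route (iii) through (ii)'' is not explained either: your next paragraph proves (ii)$\Rightarrow$(iv), not (iii)$\Rightarrow$(ii).

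The paper's route is as follows.
\begin{enumerate}[label=\rm(\arabic*)]
\item Four distinct eigenvalues force $m\neq m'$, since otherwise the graph would be strongly regular. Take $m>m'$.
\item Each of the $mm'$ non-neighbours of $x$ has exactly $2$ neighbours in $\Gamma(x)$, so there are $2mm'$ edges between $\Gamma(x)$ and the non-neighbours. Counting these from the $\Gamma(x)$ side shows that $x$ has exactly $m$ neighbours of the first type and $m'$ of the second.
\item Let $B_1$ be the first-type adjacency matrix and $B_2=A-B_1$. Then $\{I,B_1,B_2,J-I-A\}$ is a $3$-class scheme. The paper cites this; directly, $A^2=kI+(m-1)B_1+(m'-1)B_2+2(J-I-A)$ together with $m\neq m'$ puts $B_1$ in the $4$-dimensional algebra $\A$.
\item Only now are the intersection numbers constant. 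From $k_2p^2_{23}=k_3p^3_{22}$ with $k_3=mm'$ you get $p^2_{23}=mp^3_{22}$. Since $p^2_{23}\le m'-1<m$, this forces $p^3_{22}=p^2_{23}=0$.
\item Then $p^2_{13}=m$, $p^2_{11}=p^2_{12}=0$, $p^1_{11}=m-1$ and $p^2_{22}=m'-1$. Hence $B_1$ and $B_2$ are disjoint unions of $(m+1)$- and $(m'+1)$-cliques.
\end{enumerate}
The constancy of $p^3_{22}$, which comes from the scheme, is exactly the ingredient a purely local argument with $\mu=2$ does not supply.

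\textbf{(ii)$\Rightarrow$(iv).} Your route is the paper's: the paper gets $\A=\langle I,K,K',J\rangle$ by a dimension count, and your spectral decomposition gives the same thing. But the case analysis you only ``expect'' to work must be carried out, and it is short.
\begin{enumerate}[label=\rm(\arabic*)]
\item Write $A=aI+bK+cK'+dJ$. Then $\theta=a$, $\lambda=a+bv/(m+1)$ and $\lambda'=a+cv/(m'+1)$.
\item Hence $b\neq 0\neq c$. Otherwise $A$ lies in $\langle I,J,K'\rangle$ or $\langle I,J,K\rangle$ and has at most three eigenvalues.
\item On pairs in the same $K$-cell only, the same $K'$-cell only, and neither, the entries of $A$ are $b+d$, $c+d$ and $d$. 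All must lie in $\{0,1\}$, which forces $(b,c,d)=(1,1,0)$ or $(-1,-1,1)$.
\item On distinct pairs in the same $K$- and $K'$-cell, the entry $b+c+d$ is then $2$ or $-1$. So $\ell=1$, and $A=K+K'-2I$ or $A=J+I-K-K'$.
\end{enumerate}
No connectivity or complementation argument is needed. Your claim that this last entry ``must be $0$'' is unfounded, since it equals $-\theta$, though the point is moot.

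The remaining parts agree with the paper: (i)$\Leftrightarrow$(ii), (iv)$\Rightarrow$(ii), (iv)$\Rightarrow$(iii), $m\neq m'$, and the closure of $\langle I,K,K',J\rangle$ under both products via $K\circ K'=I$.
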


\begin{proof}
We note first that if $\G$ is the the Cartesian product of $K_{m+1}$ and $K_{m'+1}$, then it has eigenvalues $m'-1$ and $m-1$ with multiplicities $m$ and $m'$, respectively (the other two eigenvalues are $k=m+m'$ and $-2$), and it follows that $\G$ has two corresponding full indubitable partitions.
Indeed, one indubitable partition has $m+1$ parts and parameters $(m',1)$, which is full corresponding to eigenvalue $\lambda=m'-1$ because it has multiplicity $m$. Similarly, this holds for the other partition.
It thus follows easily that (iv) implies (i)--(iii).

Next, we will show that (i) --- or equivalently (ii) --- implies (iv).
As before, we let $K$ and $K'$ be such that
$E_\lambda = \frac{m+1}{v}K - \frac{1}{v}J$ and
$E_{\lambda'} = \frac{m'+1}{v}K' - \frac{1}{v}J$.

Consider the matrices $K-I$, $K'-I$, and $J-I-(K-I)-(K'-I)$. Each has zero diagonal, and it follows from Proposition~\ref{xO} that for each of the three matrices, there is a $1$ entry in a position where the other two matrices have a $0$ entry. Because the adjacency matrix $A$ of $\G$ has four distinct eigenvalues, this implies that $\A=\langle I,K-I,K'-I, J-I-(K-I)-(K'-I) \rangle$ and
$$A=\alpha_1 (K-I) + \alpha_2 (K'-I) + \alpha_3 (J+I-K-K')
$$ for some $01$-coefficients $\alpha_i$ $(1\le i\le 3)$.

If $\alpha_3=\alpha_1$ (in which case $A \in \langle J,I,K' \rangle$) or similarly $\alpha_3=\alpha_2$ (in which case $A \in \langle J,I,K \rangle$), then $A$ has at most three distinct eigenvalues, which is a contradiction. Thus, two cases remain ($\alpha_1=\alpha_2 \ne \alpha_3$), namely $A=K-I+K'-I$ or $A=J+I-K-K'$.
In order for these to be $01$-matrices, we must have $v=(m+1)(m'+1)$ (that is, $\ell=1$ in Proposition~\ref{xO}), in which case $\G$ indeed is the Cartesian product of the complete graphs $K_{m+1}$ and $K_{m'+1}$ or its complement. Moreover, this shows that $m \ne m'$, for otherwise $\G$ would have only three distinct eigenvalues.

Next, we consider $\A=\langle I \rangle + \E_{\lambda}+\E_{\lambda'}=\langle I,J,E_{\lambda},E_{\lambda'} \rangle$. Clearly, this algebra contains $I$ and $J$ and is closed under ordinary multiplication. By using Proposition~\ref{xO}, it follows that
\begin{align*}
v^2E_\lambda \circ E_{\lambda'} &=((m+1) K - J)\circ
((m'+1) K' -  J)\\
& =(m+1)(m'+1)I-(m+1)K-(m'+1)K'+J\\
&= vI-J-vE_\lambda-vE_{\lambda'}.
\end{align*}
Because $\E_{\lambda}$ and $\E_{\lambda'}$ are closed under entrywise product by definition, this shows that $\A$ is also closed under the entrywise product, and hence $\A$ is the Bose-Mesner algebra of an association scheme that is clearly generated by $\G$.

What remains is to show that (iii) implies (iv). Let $\G$ therefore be co-edge-regular as in (iii), with adjacency matrix $A$.
Then every vertex has $mm'$ vertices at distance $2$. By standard counting of edges in the first subconstituent (i.e., the neighborhood) of a fixed vertex $x$ (the numbers of edges between subconstituents and in the second subconstituent are all known), it follows that each vertex $x$ has $m$ neighbors $y$ such that $x$ and $y$ have $m-1$ common neighbors, and $m'$ neighbors $z$ such that $x$ and $z$ have $m'-1$ common neighbors.

Define $B_1$ as the $01$-matrix such that $(B_1)_{xy}=1$ if $x$ and $y$ are adjacent with $m-1$ common neighbors and let $B_0=I$, $B_2=A-B_1$, and $B_3=J-I-A$, then it follows from \cite[Theorem~6.3]{FMPS} that $\{B_0,B_1,B_2,B_3\}$ is the standard basis of an association scheme with Bose-Mesner algebra $\A$. (Alternatively, note that \cite[Theorem~5.1]{Etc} applies to $B_3$.)

Using the standard notation for the valencies $k_i$ and intersection numbers $p^h_{ij}$ of an association scheme, first note that $k_1=m$ and $k_2=m'$ (see above). Assume without loss of generality that $m>m'$ (note that $m$ and $m'$ must be distinct, for otherwise the graph would be strongly regular and have only $3$ eigenvalues).

Recall that $k_h p^{h}_{ij}=k_jp^j_{ih}$, which implies that $k_2p^2_{23}=k_3p^3_{22}$. Suppose now that $p^3_{22}>0$. Then $p^2_{23} \ge k_3/k_2=m>m'=k_2$, which is a contradiction. Thus, $p^3_{22}=0$, which yields $p^2_{23}=0$. Since $p^2_{13}+p^2_{23}=k-1-(m'-1)=m$, it follows that $p^2_{13}=m$.
Now $p^2_{11}+p^2_{12}=k_1-p^2_{13}=0$, so $p^2_{11}=p^2_{12}=0$ and hence also $p^1_{12}=p^1_{22}=0$. Since $p^1_{11}+p^1_{12}+p^1_{21}+p^1_{22}=m-1$, it follows that $p^1_{11}=m-1$ and similarly $p^2_{22}=m'-1$. This implies that $B_1$ is a disjoint union of $(m+1)$-cliques and $B_2$ is a disjoint union of $(m'+1)$-cliques, which shows (iv).
\end{proof}

We note that the generated association scheme in the above theorem is the well-known \emph{rectangular scheme} $R(m+1,m'+1)$. It was first introduced by Vartak \cite{Va}.

\begin{corollary}
\label{rE}
Let $\G$ be a connected $k$-regular bipartite graph on $v$ vertices with four distinct eigenvalues. If $\G$ has a full indubitable partition corresponding to an eigenvalue $\lambda \ne -k$, then $\G$ is distance-regular with intersection array
$(m,m-1,1; 1, m-1, m)$,
i.e., it is the bipartite double of the complete graph $K_{m+1}$.
\end{corollary}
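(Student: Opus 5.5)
The plan is to use Section~\ref{eC} to get a second full indubitable partition for free, and then apply Theorem~\ref{xQ}. Since $\G$ is connected, $k$-regular and bipartite, $-k$ is a simple eigenvalue, and the color classes form a full indubitable partition corresponding to $-k$ with $m'=1$. The hypothesis supplies a full indubitable partition corresponding to $\lambda\ne -k$, of multiplicity $m$ say. So statement (ii) of Theorem~\ref{xQ} holds for the pair $\lambda,\lambda'=-k$, and $\G$ has four distinct eigenvalues. Hence $\G$ or its complement is the Cartesian product $K_{m+1}\,\square\, K_2$, and moreover $m\ne 1$.

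Next I decide which of the two alternatives occurs. The product $K_{m+1}\,\square\, K_2$ contains triangles as soon as $m\ge 2$, so it is not bipartite. As $m\ne m'=1$ and $m\ge 1$, we have $m\ge 2$, so $\G$ must be the complement of $K_{m+1}\,\square\, K_2$.

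I would then identify this complement concretely. Its vertices are pairs $(i,s)$ with $i\in\{0,\dots,m\}$ and $s\in\{0,1\}$. Two distinct pairs are adjacent in the complement exactly when they differ in both coordinates, that is, $(i,s)\sim(j,1-s)$ with $i\ne j$. This is precisely the adjacency matrix $(J_2-I_2)\otimes(J_{m+1}-I_{m+1})$, the bipartite double of $K_{m+1}$, which is $K_{m+1,m+1}$ minus a perfect matching. It is bipartite and connected for $m\ge 2$, so it is consistent with the hypotheses.

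Finally I would check distance-regularity directly. Take a vertex $(i,0)$. Its neighbours are the $m$ vertices $(j,1)$ with $j\ne i$. The vertices at distance $2$ are the other $m$ vertices $(j,0)$. The unique vertex at distance $3$ is $(i,1)$. Counting from here gives $b_0=m$ and $c_1=1$; a distance-2 vertex $(j,0)$ has $c_2=m-1$ common neighbours with $(i,0)$ and $b_2=1$; and $c_3=m$. This yields the intersection array $(m,m-1,1;1,m-1,m)$. The only step needing care is excluding $K_{m+1}\,\square\, K_2$ itself, which the triangle argument handles, together with $m\ge 2$ from Theorem~\ref{xQ}.
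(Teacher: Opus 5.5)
Your proof is correct and follows the paper's argument. The bipartition gives a second full indubitable partition for $-k$ with $m'=1$. Theorem~\ref{xQ} then shows that $\G$ or its complement is the Cartesian product of $K_{m+1}$ and $K_2$, and bipartiteness rules out the product itself.

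You make explicit two things the paper leaves implicit: the triangle argument using $m\ne m'=1$, and the direct check of the intersection array. In that check, $b_1=m-1$ is the one entry you did not write out, but it is immediate.
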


\begin{proof} It follows from Section~\ref{eC} that $\G$ has a second full indubitable partition for the eigenvalue $-k$ with multiplicity $m'=1$; so, by Theorem~\ref{xQ}, $\G$ is the complement of the Cartesian product of $K_{m+1}$ and $K_2$ (because it is bipartite), which is distance-regular as stated.
\end{proof}

Note that for this bipartite distance-regular graph, we have that $v=2(m+1)$. Indeed, it has another full indubitable partition, into $m+1$ parts with parameters $(0,1)$, for the eigenvalue $\lambda=-1$.

\subsection{Bipartite with five eigenvalues}
\label{xZ}

Besides graphs with four eigenvalues, we can also characterize bipartite graphs with five distinct eigenvalues that have an additional full indubitable partition. Indeed, we will show that in such a case $\G$ must be the bipartite double of a complete multipartite graph. Before doing so, we first introduce some notation and make some general observations.

In particular, we will first show that no bipartite graph can have a full indubitable partition corresponding to eigenvalue $0$.

Let $\G$ be a connected regular bipartite graph on $v$ vertices. Let $N$ be the incidence matrix of $\G$, that is, we can write its adjacency matrix $A$ (after a suitable ordering of the vertices) as
$$A=\begin{bmatrix}O& N\\ N^{\top}& O \end{bmatrix}.$$
Using the corresponding bipartition of $\G$ and $L_2:=J_2-I_2$, we also introduce the matrices
$$F=\begin{bmatrix}J_{v/2}& -J_{v/2}\\ -J_{v/2}& J_{v/2} \end{bmatrix} \text{~and~} L=J-F=\begin{bmatrix}O& J_{v/2}\\ J_{v/2}& O \end{bmatrix}=L_2 \otimes J_{v/2}.$$
It is well-known that eigenvectors of $\lambda$ can easily be transformed into eigenvectors of $-\lambda$ and vice versa using the bipartition of $\G$. It follows that $E_{-\lambda} = F \circ E_{\lambda}$ for each eigenvalue $\lambda$.

\begin{proposition}\label{prop:nozero} Let $\G$ be a connected bipartite graph with an eigenvalue $0$. Then $\G$ does not have a full indubitable partition corresponding to $0$.
\end{proposition}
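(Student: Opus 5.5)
The plan is to exploit the symmetry $E_{-\lambda}=F\circ E_{\lambda}$ observed just before the statement, specialized to $\lambda=0$, and to compare it with the explicit form of $E_0$ given by Theorem~\ref{kD}. As throughout this section, I take $\G$ to be regular (so that Theorem~\ref{kD} applies). I argue by contradiction: suppose $\G$ has a full indubitable partition corresponding to the eigenvalue $0$, with multiplicity $m\ge 1$.

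First, by Theorem~\ref{kD} (via Proposition~\ref{pe} and Lemma~\ref{1D}),
$$E_0=\tfrac{m+1}{v}K-\tfrac1v J,$$
where $K$ is a $01$-matrix permutation-similar to $I_{m+1}\otimes J_{v/(m+1)}$. Consequently every entry of $E_0$ equals either $\frac{m}{v}$ or $-\frac1v$. In particular, $E_0$ has no zero entries, since $m\ge1$.

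Second, apply the identity $E_{-\lambda}=F\circ E_{\lambda}$ with $\lambda=0$. Since $-0=0$, this gives $E_0=F\circ E_0$. I would justify this briefly: if $u$ is an eigenvector for $\lambda$, then flipping the sign of $u$ on one colour class yields an eigenvector for $-\lambda$. Hence, with $D=\mathrm{diag}(I_{v/2},-I_{v/2})$, we have $E_{-\lambda}=DE_\lambda D=F\circ E_\lambda$, where $F$ is taken entrywise up to the positive scaling, i.e.\ only the signs of $F$ matter. So the precise statement I use is $E_0=DE_0D$.

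Third, compare entries. The identity $E_0=DE_0D$ says that every entry of $E_0$ in the off-diagonal blocks (rows indexed by one colour class, columns by the other) equals its own negative, so it is zero. Such entries exist, since both colour classes are nonempty. This contradicts the first step, where all entries of $E_0$ are nonzero.

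The only point needing care is the normalization of $F$ in $E_{-\lambda}=F\circ E_\lambda$: literally $F$ has entries $\pm1$, and the identity should be read as the sign-pattern conjugation $DE_\lambda D$. I expect no real obstacle beyond stating this cleanly. An alternative check is to note that the quotient $Q=\frac{k}{m+1}J$ would force both endpoints of each edge to have neighbours in every cell, while the eigenvectors $P x$ (with $x\perp\mathbf 1$) must also be compatible with the sign-flip map; but the entrywise argument above is shorter.
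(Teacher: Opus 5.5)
Your proof is correct and follows essentially the paper's argument: $E_0=F\circ E_0$ (which is literally $DE_0D$, since $F=DJD$, so no normalization issue arises) forces zeros in the off-diagonal blocks. Where the paper then concludes that $E_0$ has at least three distinct entries (zero, a positive diagonal entry, and a negative entry coming from the zero row sums), you instead contradict the explicit entries $\frac{m}{v}$ and $-\frac{1}{v}$ given by Theorem~\ref{kD}; note also that regularity need not be imposed separately, since any indubitable partition forces constant degree $a+rb$.
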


\begin{proof} Take $\lambda=0$. From the above, we find that $E_{\lambda} = F \circ E_{\lambda}$, hence $L \circ E_{\lambda}=0$, which implies that $E_{\lambda}$ has an entry zero. Thus, $E_{\lambda}$ must have at least three distinct entries, which proves the result.
\end{proof}

Let $\G$ now be a connected $k$-regular bipartite graph on $v$ vertices with five distinct eigenvalues. Because bipartite graphs have a symmetric spectrum, the distinct eigenvalues are $\pm k,\pm \lambda$, and $0$, with $\pm \lambda$ having multiplicity $m$, say. The spectral decomposition of $A$ now gives that
$$A=kE_k+\lambda E_\lambda -\lambda E_{-\lambda} -kE_{-k}
=L \circ (\frac{2k}{v} J_v+2\lambda E_\lambda).$$

\begin{theorem}
\label{rC}
Let $\G$ be a connected $k$-regular bipartite graph on $v$ vertices with five distinct eigenvalues. If $\G$ has a full indubitable partition corresponding to an eigenvalue $\lambda \ne -k$ with multiplicity $m$, then $\lambda=-k/m$ and $\G$ is the bipartite double of a complete $(m+1)$-partite graph.
\end{theorem}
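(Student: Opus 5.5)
Since $\lambda\ne -k$ and, by Proposition~\ref{prop:nozero}, $\lambda\neq 0$, the eigenvalue with the full indubitable partition is $\pm\lambda$ in the notation above. Replacing $\lambda$ by $-\lambda$ if necessary, we take $\lambda$ to be that eigenvalue, so that by Theorem~\ref{kD}
$$E_\lambda=\tfrac{m+1}{v}K-\tfrac1vJ,\qquad K\sim I_{m+1}\otimes J_{v/(m+1)} .$$
The starting point is the displayed identity
$$A=L\circ\big(\tfrac{2k}{v}J+2\lambda E_\lambda\big)=L\circ\big(\tfrac{2(k-\lambda)}{v}J+\tfrac{2\lambda(m+1)}{v}K\big).$$
For $x,y$ in different colour classes, the entry $A_{xy}$ is therefore $\tfrac{2(k-\lambda)}{v}$ if $K_{xy}=0$ and $\tfrac{2(k-\lambda)+2\lambda(m+1)}{v}$ if $K_{xy}=1$. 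Each of these must lie in $\{0,1\}$, and they differ because $\lambda\ne0$. So one is $0$ and the other is $1$.

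\emph{Case 1: $k=\lambda$.} Then $A=L\circ K\cdot\tfrac{2\lambda(m+1)}{v}$. This is impossible because $\lambda\ne k$ (and $\lambda=k$ would make $E_\lambda$ the all-ones projection).

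\emph{Case 2: $\tfrac{2(k-\lambda)}{v}=1$ and $\tfrac{2(k-\lambda)+2\lambda(m+1)}{v}=0$.} This gives $k-\lambda=v/2$, which forces $\lambda=k-v/2$. Also $\lambda(m+1)=-(k-\lambda)$, so $\lambda=-k/m$. Then $A=L\circ(J-K)$: a vertex is adjacent to exactly the vertices in the other colour class that lie outside its own cell of the partition. The remaining case, $\tfrac{2(k-\lambda)}{v}=0$, is excluded because $\lambda\ne k$.

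Next I determine how the cells sit relative to the bipartition. The partition is indubitable, so each vertex has the same number $b=\tfrac{k-\lambda}{m+1}$ of neighbours in every other cell. Rather than argue combinatorially, I use orthogonality with $E_{-k}=\tfrac1vF$. From $E_\lambda E_{-k}=\OO$ we get $KF=\tfrac{1}{m+1}JF=\OO$, since $JF=\OO$. Hence every cell meets both colour classes equally, in $\tfrac{v}{2(m+1)}$ vertices each. Writing $K=\begin{bmatrix}K_0&K_0\\K_0&K_0\end{bmatrix}$ after a suitable ordering with $K_0\sim I_{m+1}\otimes J_{v/(2(m+1))}$, we get $A=L_2\otimes (J-K_0)$. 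This is the bipartite double of the complete $(m+1)$-partite graph with adjacency matrix $J-K_0$.

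The main obstacle is the bookkeeping of the sign of $\lambda$: showing that the full partition belongs to the eigenvalue for which the displayed formula for $A$ is used with the correct sign. One could also justify that $K$ has the block form above via Proposition~\ref{xO}, applied to the two full partitions for $\lambda$ and $-k$, which gives $K\sim J_2\otimes I_{m+1}\otimes J_\ell$ directly. Finally, one checks consistency: the bipartite double of $K_{(m+1)\times t}$ has eigenvalue $-k/m$ of multiplicity $m$ and five distinct eigenvalues.
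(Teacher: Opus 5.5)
Your proposal is correct and is essentially the paper's proof: you substitute $E_\lambda=\frac{m+1}{v}K-\frac1vJ$ into $A=L\circ(\frac{2k}{v}J+2\lambda E_\lambda)$ and use orthogonality with $E_{-k}=\frac1vF$ to get $K=J_2\otimes I_{m+1}\otimes J_\ell$ (the paper quotes Proposition~\ref{xO} here, whose proof is the same computation as your $KF=\OO$); the $01$-condition then forces $2(k-\lambda)=v$ and $2\lambda(m+1)=-v$. Two small remarks: the $KF=\OO$ step should come before you assert that both entry values lie in $\{0,1\}$, since it is what guarantees that cross-class pairs with $K_{xy}=1$ actually occur; and the sign ``obstacle'' you raise is vacuous, because $L\circ E_{-\lambda}=-L\circ E_\lambda$ makes the identity for $A$ symmetric under $\lambda\mapsto-\lambda$.
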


\begin{proof}
First we note that $\lambda \ne 0$ by Proposition \ref{prop:nozero}. Again, it follows from Section~\ref{eC} that $\G$ has a second full indubitable partition for the eigenvalue $-k$ with multiplicity $m'=1$, and so Proposition~\ref{xO} yields that
$E_{\lambda}= \frac{m+1}{v} K - \frac{1}{v} J_v$ and $E_{-k}= \frac{2}{v} K' - \frac{1}{v} J_v$, where we reorder the vertices such that
$$
K=J_{2}\otimes I_{m+1}\otimes J_{\ell},\qquad \text{and} \qquad
K'= I_{2}\otimes J_{m+1}\otimes J_{\ell}
$$ (with $\ell=v/(2(m+1))$).
Note that we were careful to have the same ordering of vertices (that is, according to the bipartition) as above, so that $E_{-k}= \frac{2}{v} K' - \frac{1}{v} J_v= \frac1v F$. Thus, from the spectral decomposition, we obtain that
\begin{align*}
vA&=L \circ (2k J_v+ 2v\lambda E_\lambda)
= L \circ (2(k-\lambda)J_v +2\lambda(m+1)K) \\
&= L\circ(
2(k-\lambda)J_v +
2\lambda(m+1)J_2\otimes I_{m+1}\otimes J_{\ell})\\
&= (L_2\otimes J_{v/2}) \circ (J_2 \otimes
[2(k-\lambda)J_{m+1} +2\lambda(m+1) I_{m+1}]\otimes J_{\ell})\\
&= L_2 \otimes [2(k-\lambda)J_{m+1} +2\lambda(m+1) I_{m+1}]\otimes J_{\ell}.
\end{align*}
(The line above is due to the mixed-product property $(A\otimes B) \circ (C \otimes D)=(A \circ C)\otimes(B \circ D)$, for matrices $A, B, C$, and $D$ such that $A$ has same size as $C$ and $B$ has same size as $D$).
Because $A$ is a $01$-matrix, and $\lambda \notin \{0,-k\}$, it follows that $2(k-\lambda)=v$ and $2\lambda(m+1)=-v$, which yields $\lambda=-k/m$.

Now $A$ can be further simplified as $A=L_2 \otimes (J_{m+1}-I_{m+1}) \otimes J_{\ell}$, i.e., $\G$ is the bipartite double of a complete $(m+1)$-partite graph.
\end{proof}

Note that, similarly to Theorem~\ref{xQ}, it can easily be shown that $\langle I,J,F,E_{\lambda},E_{-\lambda} \rangle$ is closed under entrywise multiplication, and hence it is the Bose-Mesner algebra of a $4$-class association scheme. Needless to say that this scheme is generated by $\G$ (but the other relations are not used in the above proof).

Note also that the case $\ell=1$ is degenerate and gives the graph of Corollary \ref{rE}.\\

\section*{Acknowledgments}

This work is supported in part by the Slovenian Research Agency (research program P1-0285 and research projects J1-3001 and N1-0353).

Edwin van Dam thanks \v{S}tefko Miklavi\v{c}, Giusy Monzillo, Safet Penji\'{c}, and the University of Primorska for the kind hospitality during visits in the summers of 2024 and 2025.

\bigskip
{\bf Data availability statement.} No datasets were generated or analyzed during the current study.



\bigskip
{\small

}

\end{document}